\newtheorem{theorem}{Theorem}
\newtheorem{definition}{Definition}
\newtheorem{corollary}{Corollary}
\newtheorem{lemma}{Lemma}
\newtheorem{example}{Example}
\newtheorem{remark}{Remark}
\newcommand{\MyBib}{./MyBib}
\newcommand{\InclTDC}
\numberwithin{equation}{section}
\numberwithin{theorem}{section}
\numberwithin{corollary}{section}
\numberwithin{definition}{section}
\numberwithin{lemma}{section}
\numberwithin{remark}{section}
\numberwithin{example}{section}
\DeclareMathOperator{\E}{\mathbb{E}}
\renewcommand{\Q}{\mathbb{Q}}
\DeclareMathOperator{\var}{\textrm{var}}
\DeclareMathOperator{\filF}{\mathcal{F}}
\DeclareMathOperator{\e}{\textrm{e}}
\DeclareMathOperator{\ind}{1{\hskip -2.5 pt}\textrm{I}}
\DeclareMathOperator{\sign}{\textrm{sign}}
\DeclareMathOperator{\im}{\textrm{im}}
\DeclareMathOperator{\dom}{\textrm{dom}}
\newcommand{\beq}{\begin{equation}}
\newcommand{\eeq}{\end{equation}}
\newcommand{\beqn}{\begin{eqnarray}}
\newcommand{\eeqn}{\end{eqnarray}}
\newcommand{\bfig}{\begin{figure}}
\newcommand{\efig}{\end{figure}}
\newcommand{\btab}{\begin{table}}
\newcommand{\etab}{\end{table}}
	\renewcommand{\up}{\textrm{up}}
	\DeclareMathOperator{\up}{\textrm{up}} 
\title{Conic Martingales from Stochastic Integrals}
\author{Fr\'ed\'eric VRINS\thanks{Contact author. E-mail: \href{mailto:frederic.vrins@uclouvain.be}{frederic.vrins@uclouvain.be} $^\dag$The research of Monique Jeanblanc is supported by the Chaire March\'es en Mutation (F\'ed\'eration Bancaire Fran\c caise).}~~and~~Monique JEANBLANC$^\dag$\\ \vspace{0.1cm}\\
$^*$ \textit{Universit\'e catholique de Louvain}\\
Louvain School of Management\\
Chauss\'ee de Binche 151, 7000 Mons, Belgium\\
\&\\
Center for Operational Research and Econometrics (CORE)\\
Voie du Roman Pays 20, 1348 Louvain-la-Neuve, Belgium\\
\vspace{0.1cm}\\
\\
$^\dag$ \textit{Universit\'e d'\'Evry Val d'Essonne}\\
Laboratoire de Math\'ematiques et Mod\'elisation d'\'Evry (LaMME)\\
UMR CNRS 8071, \'Evry, France
}
\date\today
\begin{document}
\maketitle
\begin{abstract}
In this paper we introduce the concept of \textit{conic martingales}. This class refers to stochastic processes having the martingale property, but that evolve within given (possibly time-dependent) boundaries. We first review some results about the martingale property of solution to driftless stochastic differential equations. We then provide a simple way to construct and handle such processes. Specific attention is paid to martingales in $[0,1]$. One of these martingales proves to be analytically tractable. It is shown that up to shifting and rescaling constants, it is the only martingale (with the trivial constant, Brownian motion and Geometric Brownian motion) having a separable coefficient $\sigma(t,y)=g(t)h(y)$ and that can be obtained via a time-homogeneous mapping of \textit{Gaussian diffusions}. The approach is exemplified to the modeling of stochastic conditional survival probabilities in the univariate (both conditional and unconditional to survival) and bivariate cases.
\end{abstract}
\textbf{Keywords :}
Bounded martingale\textbf{}, stochastic differential equation, diffusion process, stochastic survival probability
\section{Introduction and motivation}
\label{sec:intro}
%

Mathematical finance extensively relies on martingales, mainly due
to the Fundamental Theorem of Asset Pricing. For instance, they
are used to represent the dynamics of (non-dividend paying) asset
prices, denominated in units of num\'eraire under an adequate
associated measure. As a consequence, under a true martingale
condition, asset price processes are   governed by conditional
risk-neutral expected values of future (discounted) cashflows.
Martingales are also central in measure change techniques (via
Radon-Nikodym derivative processes).

Depending on the situation, the martingale processes may be
subjected to some constraints. Discounted stock prices and
Radon-Nikodym derivative processes are positive. Therefore,
exponential martingales, which meet the non-negativity constraint
are very popular tools.

Financial processes can be subjected to other constraints, like
being bounded below \textit{and} above. This is for instance the
case of discounted zero-coupon bond prices in the case where
interest rates (short rate $r_s$) cannot be negative:
\beq 
P_{t}(T)=\E\left[ \exp - \int_t^T r_sds \Big|\filF_t\right],\nonumber
 \eeq
which belongs to $[0,1]$ almost surely, and thus so is the
martingale $P_{t}(T) \e^{-\int_{0}^t r_sds}$. Similarly,
conditional survival probabilities $S_{t}(T)$ (probability that a
default event $\tau$ occurs after a given time $T$ as seen from
time $t$), defined as the conditional expected value of survival
indicators
\beq 
S_{t}(T)=\E\left[\ind_{\{\tau>T\}}|\filF_t\right]\label{eq:DefStT} 
\eeq
are   $(\filF_t)_{t\geq 0}$-martingales valued in $[0,1]$. Note
that here, we have to deal with a  family of martingales
depending on the parameter $T$ and that $S_t(T)$ has to be,
for any $t$, decreasing with respect to $T$. Note that $\E[S_t(T)]=  \Q\{\tau >T\}=S_0(T)$.

Surprisingly however, bounded martingales received little
attention. In the case of survival
probabilities modeling for instance, practitioners often disregard
inconsistencies, working with Gaussian processes (which are not
constrained to evolve in the unit interval) instead (see
e.g.~\cite{Ces09}). This also applies to many standard approaches,
where shifted Ornstein-Uhlenbeck (Hull-White) or shifted
square-root diffusion (SSRD, also known as CIR$^{++}$) are used as
intensity processes, and may lead to probabilities exceeding
1.\footnote{The shift in the square-root process is required in
order to fit CDS quotes, and may indeed affect the positivity of
the resulting stochastic intensity.}

In spite of these drawbacks, these methods remain popular. We
believe this results from the lack of adequate and tractable
alternatives. Our purposes here is precisely to propose a
contribution to fill this gap. We introduce the concept of
\textit{conic martingales} (a naming that we justify in the
sequel), which corresponds to the intuitive idea of ``martingales
evolving between bounds''. We shall see how such processes can be
handled and simulated in such a way that the paths stay within the
bounds. We further study some properties like the implied
distribution and the asymptotic behavior. Specific interest is
dedicated to martingales obtained by mapping Gaussian processes
through functions with image being a compact set.

The paper is an extension of~\cite{Vrins14,Vrins15a}, and is
organized as follows. We first sketch the model setup in
Section~\ref{sec:Setup} and recall some results related to
existence, uniqueness and martingale property of driftless
stochastic differential equations (SDE) in
Section~\ref{sec:results}. The concepts of \textit{cone} and
\textit{conic martingales} are then introduced in
Section~\ref{sec:cones}. We then discuss how those can be
constructed in Section~\ref{sec:construction} and focus on one
particular process (Section~\ref{sec:Phi}). Finally, we apply this
process to the modeling of univariate (conditional and
unconditional) and bivariate survival probability modeling in
Section~\ref{sec:SP} before concluding.

\section{Setup}
\label{sec:Setup}
%

We consider a filtered probability space
$(\Omega,\filF,\mathbb{F}=(\filF_t)_{t\geq 0},\mathbb{Q})$ where
$\mathbb{F}$ is a Brownian filtration hence, any martingale
is continuous. In the sequel, all the processes are defined on
$(\Omega,\mathbb{F})$. We study the martingale property of the
solution $Y$ to a ``driftless'' SDE of the form
\beq
dY_t=\sigma(t,Y_t)dW_t~,~0\leq t\leq T\label{eq1}
\eeq
where $W$ is a $\mathbb{Q}$-Brownian motion adapted to the
filtration $\mathbb{F}$ and, for each $t$, the random variable $Y_t$
is restricted to be in some interval.

 In the sequel, we shall omit the specification
``$0\leq t\leq T$'' in case of non-ambiguity, and the
diffusion coefficient function $\sigma(t,y):[0,T]\times
\mathbb{R}\to A\subseteq\mathbb{R}$ is always assumed to be
continuous in $y$ and \textit{Borel-measurable} in $t$.

The process $Y $ is a \textit{local martingale}, but may fail to
be a martingale. Additional technical conditions are required to
prevent the use of ``too fancy'' diffusion coefficient  functions.
However, $\sigma(.,.)$ does not need to be ``that fancy'' for the
(global) martingale property to be lost. For instance, the
solution to~(\ref{eq1}) is a   martingale if $\sigma(t,x)=x$
(Geometric Brownian motion) but is a strict local martingale when
$\sigma(t,x)=x^2$. The distinction between local and global
martingale is   crucial in financial applications in order to
prevent arbitrage opportunities and bubbles \cite{CoxH05},
\cite{Prott13}. To determine whether the solution to the above SDE
is indeed a martingale, the following square integrability
condition can be useful (see e.g.,~\cite{Kara05},~\cite{RevYor99}
and Section 4.9 of~\cite{Shrev04})

\begin{theorem}\label{th:martingale}
The stochastic  process  $\int_0^{\cdot}\sigma(s,Y_s)dW_s$ is a martingale on $[0,T]$ if
\beq
\E\left[\int_{0}^T \sigma^2(s,Y_s)ds\right]<\infty\label{eq1m}
\eeq
\end{theorem}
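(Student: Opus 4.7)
My plan is to reduce the statement to the standard $L^2$-theory of Itô integrals via a localization argument. Under the stated measurability assumptions on $\sigma$ together with the continuity of the underlying semimartingale $Y$, the integrand $\sigma(\cdot,Y_\cdot)$ is progressively measurable, so the stochastic integral $M_t:=\int_0^t\sigma(s,Y_s)dW_s$ is well-defined as a continuous local martingale. The task is to upgrade this local martingale to a genuine martingale using hypothesis (\ref{eq1m}).

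First I would introduce the localizing sequence
\[
\tau_n:=\inf\Bigl\{t\in[0,T]:\int_0^t\sigma^2(s,Y_s)\,ds\geq n\Bigr\}\wedge T,
\]
which reduces $M$ in the sense that each stopped process $M^{\tau_n}$ is a bounded-quadratic-variation martingale. Note that $\tau_n\uparrow T$ almost surely, because the assumption (\ref{eq1m}) forces $\int_0^T\sigma^2(s,Y_s)\,ds<\infty$ a.s. Applying the Itô isometry to the truncated integrand gives
\[
\E\bigl[(M^{\tau_n}_t)^2\bigr]=\E\!\int_0^{t\wedge\tau_n}\!\sigma^2(s,Y_s)\,ds\;\leq\;\E\!\int_0^{T}\!\sigma^2(s,Y_s)\,ds\;<\;\infty,
\]
uniformly in $n$. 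Hence the family $\{M^{\tau_n}_t\}_{n\geq 1}$ is bounded in $L^2$ and therefore uniformly integrable for each fixed $t\in[0,T]$.

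Next, for $s\leq t$, the martingale property of $M^{\tau_n}$ yields $\E[M^{\tau_n}_t\mid\filF_s]=M^{\tau_n}_s$. Since $M^{\tau_n}_t\to M_t$ a.s. as $n\to\infty$ (by continuity of $M$ and $\tau_n\uparrow T$) and the family is uniformly integrable, I can pass to the limit on both sides to conclude $\E[M_t\mid\filF_s]=M_s$, which is exactly the martingale property on $[0,T]$. As a by-product, the same $L^2$-bound shows that $M$ lies in fact in the space of square-integrable martingales.

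The essential obstacle is obtaining the uniform $L^2$-bound on $M^{\tau_n}_t$, since this simultaneously delivers the uniform integrability needed to interchange limit and conditional expectation, and the convergence of $M^{\tau_n}$ to $M$ in $L^2$ (via Doob's maximal inequality applied to $M-M^{\tau_n}$). Once that bound is in hand, everything else reduces to classical properties of the Itô integral; the remaining steps — progressive measurability of the integrand, construction of $\tau_n$, and the dominated/uniform-integrability passage to the limit — are routine.
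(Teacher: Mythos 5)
Your proof is correct: the localization by $\tau_n$, the uniform $L^2$-bound via the It\^o isometry, and the uniform-integrability passage to the limit in $\E[M^{\tau_n}_t\mid\filF_s]=M^{\tau_n}_s$ together give the martingale property, and each step is justified under the stated hypotheses. The paper itself offers no proof of this theorem --- it is quoted as a standard result with references to Karatzas--Shreve, Revuz--Yor and Shreve --- and your argument is precisely the classical one found in those sources, so there is nothing to reconcile.
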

%

Condition~(\ref{eq1m}) may be difficult to check, since it
requires to have some information about the solution to the SDE
(which may even not exist). In this context, not a lot can be said
at this stage about the martingale property of $Y$. However, some useful results can be found, based on the shape of the diffusion coefficient $\sigma(t,y)$ only, as reviewed in the next section.

\section{General results on martingale property of It\^o stochastic integrals}
\label{sec:results}
%
In this section, the martingale property of $Y$ is discussed form the properties of
the deterministic diffusion coefficient function $\sigma $. The main result in that respect is the following (Theorem 2.9
in~\cite{Kara05}):
\begin{theorem}\label{th:LipTimeDep}
Let $\sigma(t,y)$ be Lipschitz in $y$ for all $t\geq 0$. 
In addition, suppose $\sigma$ satisfies the sub-linearity condition
\beq |\sigma(t,y)|\leq C(1+|y|)\label{eq3} \eeq
for some constant $C<\infty$, then eq.~(\ref{eq1}) has a pathwise unique (and thus strong)
 solution $Y $ satisfying $\E\left[\int_{0}^T Y_s^2 ds\right]<\infty$. Moreover, $Y$   is a martingale.
\end{theorem}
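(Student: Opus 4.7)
The plan is to follow the classical Picard-iteration route of It\^o and then invoke Theorem~\ref{th:martingale} at the very end. So my proof would have three stages: existence/uniqueness, an a priori second-moment bound, and the martingale conclusion.

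First, for existence and (pathwise) uniqueness, I would build an approximating sequence $Y^{(n)}$ by Picard iteration, setting $Y^{(0)}_t \equiv Y_0$ and
\begin{equation}
Y^{(n+1)}_t = Y_0 + \int_0^t \sigma(s,Y^{(n)}_s)\,dW_s.\nonumber
\end{equation}
Using It\^o's isometry and the Lipschitz assumption, one shows that $\phi_n(t) := \E[(Y^{(n+1)}_t - Y^{(n)}_t)^2]$ satisfies a recursion of the form $\phi_n(t) \le K \int_0^t \phi_{n-1}(s)\,ds$, hence $\phi_n(t) \le (Kt)^n/n!$ times an initial constant. This makes $\{Y^{(n)}\}$ Cauchy in $L^2(\Omega\times[0,T])$ and, after a Doob/BDG estimate, uniformly in $t$ in probability, so the limit $Y$ is a well-defined adapted continuous process solving the SDE. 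Pathwise uniqueness follows by the same Gr\"onwall argument applied to the difference of two putative solutions.

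Second, for the moment bound, I would apply It\^o's formula (or directly the isometry together with the sub-linearity~(\ref{eq3})) to obtain
\begin{equation}
\E[Y_t^2] \le 2Y_0^2 + 2C^2 \int_0^t \bigl(1 + \E[Y_s^2]\bigr)\,ds,\nonumber
\end{equation}
and then Gr\"onwall's lemma gives $\sup_{t\le T}\E[Y_t^2] < \infty$. Integrating in $t$ yields $\E\bigl[\int_0^T Y_s^2\,ds\bigr] < \infty$, which is the asserted integrability property. To make this rigorous despite the fact that we only know $Y$ is a priori a local martingale, I would carry out the estimate along a localizing sequence of stopping times $\tau_n = \inf\{t : |Y_t| \ge n\}\wedge T$ and then pass to the limit via Fatou.

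Third, for the martingale conclusion, the sub-linear growth bound $\sigma^2(s,Y_s) \le 2C^2(1 + Y_s^2)$ combined with the finite second moment just established gives
\begin{equation}
\E\!\left[\int_0^T \sigma^2(s,Y_s)\,ds\right] \le 2C^2 T + 2C^2\, \E\!\left[\int_0^T Y_s^2\,ds\right] < \infty,\nonumber
\end{equation}
which is exactly condition~(\ref{eq1m}); Theorem~\ref{th:martingale} then delivers the martingale property of $Y - Y_0$ and hence of $Y$. The main obstacle in making this rigorous is the Picard-iteration step: one needs to verify that each $Y^{(n)}$ remains in $L^2$, that the stochastic integrals are well-defined at every stage, and that the convergence is strong enough (uniform in $t$ in probability) to guarantee that the limit really solves the SDE and is continuous. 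Everything else reduces to Gr\"onwall and sub-linear growth, which are routine once the iteration is controlled.
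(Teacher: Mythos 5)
Your proposal is correct and follows essentially the same route as the paper, which does not prove this theorem itself but quotes it as Theorem 2.9 of \cite{Kara05}; the proof there is precisely your Picard-iteration/Gr\"onwall argument with the It\^o-isometry moment bound and the final appeal to the square-integrability criterion of Theorem~\ref{th:martingale}. The only point worth making explicit is that the Lipschitz constant must be uniform in $t$ for the iteration estimate $\phi_n(t)\le K\int_0^t\phi_{n-1}(s)\,ds$ to close, which is the standing assumption in the cited result.
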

Condition~(\ref{eq3}) aims at preventing explosion, while the Lipschitz constraint typically guarantees
existence and uniqueness. In particular, the solution to~(\ref{eq1}) does not explode.

This result ensures that the solution to $dY_t=g(t) Y_t dW_t$ is a
martingale for bounded functions $g$ on $[0,T]$. However, it is
not enough to ensure that the solution to the square-root
driftless SDE $dY_t=\sigma\sqrt{Y_t}dW_t$ is also a martingale
($\sqrt{x}$ fails to be  Lipschitz in any interval containing $
0$). The existence is formally proven in~\cite{Zvon74} by
replacing the Lipschitz continuity by a Holder-$1/2$ one.

The class of admissible diffusion coefficients in this theorem can be
significantly extended by replacing the global Lipschitz condition
by a local one. This covers quite a large class of coefficients
since every continuously differentiable function is locally
Lipschitz (see~\cite{Kloed99}). Yet another extension relies on
the \textit{Yamada-Watanabe} condition only~(see
e.g.~\citep{Kloed99} Section 4.5 p.134-135 and~\citep{Kara05}
\cite[Section 5.5.5]{jyc:3m}). 

It is worth noting that a sufficient and necessary condition exists in the time-homogeneous case $\sigma(t,y)=\sigma(y)$ when $\sigma(y)=0$ for all $y\leq 0$. The positive process $\int_0^{\cdot}\sigma(Y_s)dW_s$ is a martingale if and only if $x/\sigma^2(x)$ is \textit{not} integrable near infinity (see e.g.~\cite{Carr07},~\cite{Delb02}).
These generalizations are however
not necessary in the context of this paper.

\section{Cones of stochastic processes and conic martingales}
\label{sec:cones}

The above framework depicts the general context associated to the
martingale property of solutions to driftless SDEs. We would like
now to focus on bounded processes, and introduce the concept of
\textit{cone}.
\begin{definition}[Bounded process]
A stochastic process $ {Y} $ is \emph{bounded} on $[0,T]$ if
there exists a constant $M $ such that $\sup_{t\in[0,T]} |Y_t|<M$.
It is \emph{locally bounded} on $\mathbb{R}$ if for all $t>0$ there exists a
constant   $M(t) $ such that $ sup_{s\leq t} |Y_s|<M(t)$.
\end{definition}

We recall that the range $R(X)$ of a random variable $X$ is the
support (which is a closed set, see~\cite[Ch. 3-50]{Dell75}) of its distribution. By extension, we define the
\textit{range envelope} of a stochastic process.
\begin{definition}[Range envelope]
The \emph{range envelope}
 $\mathcal{S}:= \big(R(Y_t)\big)_{t>0}$ of a stochastic process
 $ Y$ is the time-indexed sequence of the ranges $R(Y_t)$ of $(Y_t)_{t>0}$.
\end{definition}
Observe that by continuity, the range envelope of a continuous process is a sequence of connected sets.
\begin{definition}[Cone]
If for each $t>0$ there exists $M(t)>0$ such that $R(Y_t)
\subset [-M(t),M(t)]$ and the sequence $R(Y_t)$ is increasing  in
the sense that for all $0< s\leq t$ we have $R(Y_s)\subseteq
R(Y_t)$, we say that $\mathcal{S}$ is the \emph{cone associated
to the stochastic process $Y$} or simply, the \textit{cone of
$Y$}.\footnote{Observe that in the definition, the cone of $Y$ is
defined for $t>0$; this is because the range of $Y_t$ is trivially
equal to the constant $Y_0$ at $t=0$.}
\end{definition}

Note that the concept of \textit{range envelope} differs from the
notion of \textit{envelope} in two points~\cite{Vene79}. First,
the latter is a set-valued \textit{random} process which
upperbounds $|Y_t|$ with probability one during any finite length
of time, while the former is a \textit{deterministic} time-indexed
sequence of compact sets.

Second, the range envelope (if it exists) is unique
since it corresponds at any time $t$ to the \textit{smallest} set
to which $Y_t$ belongs with probability 1.
\begin{definition}[Conic process]
A stochastic process is said to be \emph{conic} if its range
envelope is a cone. When the stochastic process is a martingale,
we say that it is a \emph{conic martingale}.\label{def:conproc}
\end{definition}
In this context, the word ``conic'' refers to the fact that the
range of the process is bounded and is non-decreasing with time,
so that the upper (resp. lower) bound of $Y_t$ increases (resp.
decreases) with $t$.

Notice that only processes that are bounded up to $T$ admit a cone.
For instance, neither Brownian motion
nor its Dol\'eans-Dade exponential are conic processes in the
sense of the above definitions. 
In the one dimensional case, any martingale
evolving between two deterministic real-valued functions of time
$a(t),b(t)$ satisfying $-M<a(t)\leq b(t)<M$ for all
$t\in\mathbb{R}_+$ admits a cone.

The next corollary
shows that \textit{conic} and \textit{locally bounded} martingales
are in fact a same thing.
\begin{corollary} \label{conicmart}
Any \textit{conic martingale} is a locally bounded martingale.
Reciprocally, any locally bounded continuous martingale is
a \textit{conic martingale}.
\end{corollary}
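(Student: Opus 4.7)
The plan is to handle the two implications separately, both by unpacking the definition of cone: the boundedness $R(Y_t) \subseteq [-M(t), M(t)]$ and the monotonicity $R(Y_s) \subseteq R(Y_t)$ for $0 < s \leq t$. For the forward implication, suppose $Y$ is a conic martingale. Chaining the two defining properties of its cone yields $R(Y_s) \subseteq R(Y_t) \subseteq [-M(t), M(t)]$ for every $s \in (0, t]$, so $|Y_s| \leq M(t)$ almost surely for each fixed $s \leq t$. Since every martingale in the Brownian filtration $\mathbb{F}$ has continuous paths, this pointwise-in-$s$ inequality lifts to the pathwise bound $\sup_{s \leq t} |Y_s| \leq M(t)$ almost surely by taking a supremum over a countable dense subset of $[0, t]$. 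Up to enlarging $M(t)$ slightly to recover the strict inequality in the definition of boundedness, this is exactly local boundedness.

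For the reverse implication, suppose $Y$ is a locally bounded continuous martingale. The bound $R(Y_t) \subseteq [-M(t), M(t)]$ is immediate from $|Y_t| < M(t)$ a.s. The substance lies in proving the monotonicity $R(Y_s) \subseteq R(Y_t)$ for $s \leq t$. Local boundedness ensures that $Y$ is a bounded, and therefore genuine, martingale on each finite interval $[0, t]$, so $Y_s = \E[Y_t \mid \mathcal{F}_s]$. A standard property of conditional expectations is that if $Y_t$ lies in a closed interval $I$ almost surely, then so does $Y_s$; in particular $R(Y_s) \subseteq I$. Applying this with $I$ equal to the convex hull of $R(Y_t)$ — which, by the remark following the definition of range envelope, coincides with $R(Y_t)$ itself when $Y$ is continuous, since its range at each time is then connected — yields $R(Y_s) \subseteq R(Y_t)$, completing the verification of the cone conditions.

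The main obstacle is the identification of $R(Y_t)$ with its convex hull, which rests on the nontrivial assertion that the range of a continuous process is a connected subset of $\mathbb{R}$. The paper states this in the remark preceding the corollary; a self-contained proof would likely combine a pathwise intermediate-value argument with the martingale property, to transfer values attained along continuous trajectories into the support of the fixed-time marginal. All other steps — the conditional-expectation bound, the monotonicity of the envelope, and the countable-dense-set extension — are routine.
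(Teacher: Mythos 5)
Your proof is correct and follows essentially the same route as the paper's: the forward implication is the same definitional unpacking (you are in fact slightly more careful, lifting the pointwise bound on each $R(Y_s)$ to a pathwise supremum via continuity, a step the paper dismisses as obvious), and the reverse implication rests on exactly the same two ingredients the paper uses, namely connectedness of each $R(Y_t)$ and the fact that the martingale property confines $Y_s=\E[Y_t\,|\,\filF_s]$ to the convex hull of the values of $Y_t$. The only difference is presentational: the paper argues by contradiction (a point $y_s\in R(Y_s)$ lying above $\sup R(Y_t)$ or below $\inf R(Y_t)$ would violate $\E[Y_t\,|\,\filF_s]=Y_s$), whereas you phrase the same fact positively as ``conditional expectation preserves closed intervals.'' You correctly identify that the whole weight of the argument sits on the unproved remark that the support of $Y_t$ is connected for a continuous process --- the paper leans on this just as silently (``the support of $Y_t$ cannot have holes''), so relative to the paper's own proof this is not a gap on your side, though your instinct that it is the genuinely nontrivial (and, at a fixed time, delicate) ingredient is well placed.
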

Note that on $[0,T]$, conic martingales $X$ are martingales such that $X_T$ is bounded.

\begin{proof}
The first assertion is obvious: indeed, by definition, a conic
martingale $Y $ is a martingale evolving between bounds
$-\infty<a(t)\leq Y_t\leq b(t)<\infty$; it is therefore locally
bounded.  Let us prove the converse. Let $Y $ be a martingale such
that for all $t>0$ there exists $a(t),b(t)\in\mathbb{R}:a(t)\leq
Y_t\leq b(t)$. Because the paths of $Y $ are almost surely
continuous, the support of $Y_t$ cannot have ``holes'' so that
$R(Y_t)=[a(t),b(t)]$ are closed intervals. 
It remains to prove that the time-indexed sequence of ranges is
increasing. To that end, suppose there exists $y_s\in R(Y_s)$ that
does not belong to $R(Y_t)$ for some $t>s$. Then, either $y_s>\sup
R(Y_t)$ or $y_s< \inf R(Y_t)$. In both cases,
$\E[Y_t|\filF_s]\neq y_s$ as $Y_t>y_s$ or $Y_t<y_s$ with
probability one, respectively; $Y $ fails to be a martingale.
Consequently, a necessary condition for $Y $ to be a martingale is
that the range of $ {Y}$ is an increasing sequence.
\end{proof}
In order for the solution $Y$ of a driftless SDE to exist, we must
have $\int_0^t\sigma^2(s,Y_s)ds<\infty$ almost surely. It is then
a local martingale, see e.g.~\citet{Kara05}. The following
well-known result (Th. 5.1 in  \cite{Prott05})  fills the gap
between local and genuine martingales in the special case of
(globally) bounded processes.
\begin{theorem}\label{th:locmboundm}
Every bounded local martingale is a martingale.
\end{theorem}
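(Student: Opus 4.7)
The plan is to exploit the definition of a local martingale via a localising sequence, together with the conditional dominated convergence theorem, the boundedness hypothesis supplying the required dominating function.

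First I would fix a bounded local martingale $M$ with $|M_t| \leq K$ for some constant $K<\infty$, together with a localising sequence $(\tau_n)_{n\geq 1}$ of stopping times with $\tau_n \uparrow \infty$ almost surely, such that each stopped process $M^{\tau_n} := (M_{t \wedge \tau_n})_{t \geq 0}$ is a genuine martingale. Integrability of $M_t$ is immediate since $|M_t| \leq K$ implies $\E[|M_t|] \leq K < \infty$, so the remaining task is to establish the conditional expectation identity $\E[M_t \mid \filF_s] = M_s$ for all $0 \leq s \leq t$.

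Next, for fixed $s\leq t$ and each $n$, I would apply the martingale property of the stopped process to write
\beq
\E[M_{t \wedge \tau_n} \mid \filF_s] = M_{s \wedge \tau_n}~. \nonumber
\eeq
Letting $n \to \infty$, the right-hand side converges almost surely to $M_s$ by continuity (or simply because $\tau_n \uparrow \infty$ forces $s \wedge \tau_n = s$ eventually). For the left-hand side, the random variables $M_{t \wedge \tau_n}$ converge almost surely to $M_t$ and are uniformly bounded by the constant $K$, which is integrable; the conditional dominated convergence theorem then yields $\E[M_{t \wedge \tau_n} \mid \filF_s] \to \E[M_t \mid \filF_s]$ almost surely. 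Matching the two limits gives $\E[M_t \mid \filF_s] = M_s$, which is exactly the martingale property.

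There is no real obstacle here: the key point is that the uniform bound $K$ serves as the dominating integrable random variable required by conditional dominated convergence. Without the boundedness assumption the same argument would only give a local martingale, since a.s.\ convergence of $M_{t \wedge \tau_n}$ to $M_t$ alone is insufficient to pass to the limit inside the conditional expectation. The continuity of paths assumed throughout the paper (Brownian filtration) is not needed for the statement itself, although it makes the convergence $M_{t\wedge \tau_n}\to M_t$ particularly transparent.
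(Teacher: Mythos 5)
Your proof is correct: the localisation argument combined with conditional dominated convergence, using the uniform bound $K$ as the dominating variable, is exactly the standard argument, and your remark that path continuity is not needed is accurate. The paper itself gives no proof of this statement --- it simply cites Theorem 5.1 of Protter --- and the argument you supply is precisely the one found there, so there is nothing to compare beyond noting that you have filled in what the paper delegates to a reference.
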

In the sequel, we shall focus on separable
diffusion coefficients, defined below.
\begin{definition}[Separable diffusion coefficient]
A diffusion coefficient $\sigma(t,x)$ is said \textit{separable}
if  it can be written as \beq
\sigma(t,x)=g(t)h(x)\label{eq:SDEySep} \eeq \label{def:sepcoef}
with $h$ continuous.
\end{definition}
It is obvious that when the diffusion coefficient is separable
where the time component $g $ is a bounded function for all $t\geq
0$, the solution $Y $ to SDE~(\ref{eq1}) with initial condition
$Y_0\in [a,b]$ is a continuous martingale in $[a,b]$ provided that
function $h(x)$ is continuous on $[a,b]$ and satisfies $h(x)>0$
for $x\in (a,b)$ and $h(x)=0$ for $x\in\{a,b\}$.

Obviously, analysis of martingales with constant cone can
be restricted to martingales with standard cone $[0,1]$ as any
martingale in $[a,b]$ can be obtained from a martingale in
$[0,1]$.
We thus have the following corollary. 
\begin{corollary}[]\label{cor:sepmart}
Consider the separable diffusion coefficient in~eq.(\ref{eq:SDEySep}) where $g(t)>0$, $h(x)>0$ for $x\in]a,b[$ and $h(x)=0$ elsewhere. Then, if SDE~(\ref{eq1}) admits a unique solution $Y $ satisfying $Y_0\in]a,b[$, $Y\in [a,b]$. It is therefore a bounded local martingale, and from Theorem~\ref{th:locmboundm}, a genuine martingale.
\end{corollary}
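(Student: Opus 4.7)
The plan is to show that the assumption $h(a)=h(b)=0$ turns the endpoints into absorbing barriers, so that $Y$ remains trapped in $[a,b]$; once this is established, boundedness combined with Theorem~\ref{th:locmboundm} delivers the martingale property at once. Concretely, I would introduce the exit time $\tau := \inf\{t\geq 0 : Y_t \notin (a,b)\}$, with the convention $\inf\emptyset = +\infty$. Since $Y$ is obtained as a stochastic integral against Brownian motion its paths are almost surely continuous, so on $\{\tau<\infty\}$ one necessarily has $Y_\tau \in \{a,b\}$.

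The core step is to argue, via uniqueness, that $Y$ is in fact frozen at $Y_\tau$ for $t\geq \tau$. For this I would consider the stopped process $Z_t := Y_{t\wedge \tau}$ and verify that it still satisfies~(\ref{eq1}) with the same initial condition $Y_0$. On $[0,\tau]$ this is immediate since $Z$ and $Y$ coincide there. For $t > \tau$ the process $Z$ is constant and equal to $Y_\tau \in \{a,b\}$, so $\sigma(s,Z_s) = g(s)h(Z_s) = 0$ on $(\tau, t]$, and both sides of the driftless SDE vanish. Invoking the assumed uniqueness of the solution to~(\ref{eq1}) starting from $Y_0 \in (a,b)$, one then deduces $Y = Z$, which is exactly the statement that $Y$ is absorbed upon hitting the boundary. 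In particular $Y_t \in [a,b]$ for every $t \geq 0$ almost surely.

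It remains to conclude: $Y$ takes values in the compact set $[a,b]$, hence $|Y_t| \leq \max(|a|,|b|)$; existence of the solution already implies $\int_0^t \sigma^2(s,Y_s)\,ds < \infty$ almost surely, so $Y$ is a bounded continuous local martingale, and Theorem~\ref{th:locmboundm} upgrades it to a genuine martingale. The main obstacle in this sketch is precisely the role played by the uniqueness hypothesis: without it, a driftless SDE whose diffusion coefficient merely vanishes on $\{a,b\}$ could in principle admit non-absorbing extensions past $\tau$ (for instance reflection-type behaviors), and the pasting argument above would no longer force $Y$ to stay at the boundary. Uniqueness is what singles out the absorbing solution and is therefore the condition to scrutinize in concrete applications.
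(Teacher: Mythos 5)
Your proposal is correct and follows essentially the same route as the paper: both arguments observe that, by continuity, the first exit time $\tau$ from $(a,b)$ has $Y_\tau\in\{a,b\}$, that the process frozen at $Y_\tau$ (your stopped process $Z=Y_{\cdot\wedge\tau}$) still solves the driftless SDE because $h(a)=h(b)=0$, and that the assumed uniqueness then forces $Y=Z$, so $Y$ stays in $[a,b]$ and Theorem~\ref{th:locmboundm} applies. Your write-up is somewhat more explicit than the paper's about verifying that the stopped process is a solution and about why uniqueness is the load-bearing hypothesis, but the mathematical content is the same.
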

\begin{proof} The SDE $dY_t= g(t) h(Y_t) \ind_{\{Y_t \in [a,b]\}}
dW_t$ admits a unique solution, which is a Markov process.
Denote by $\tau$ the first hitting time of the boundary. In the
case $\tau <\infty$, one obvious solution $Y$ on $[\tau,\infty)$ given $Y_\tau$ is
$Y_t= Y_\tau\in\{a,b\}$. Therefore, the solution $Y$ to the above SDE is unique and belongs to $[a,b]$.
Because $Y\in[a,b]$ $\Q$-a.s. and $h(a)=h(b)=0$, the solution to $dY_t= g(t) h(Y_t) \ind_{\{Y_t \in [a,b]\}}
dW_t$ is the same as that of $dY_t= g(t) h(Y_t)dW_t$.
\end{proof}

\section{Construction of conic martingales}
\label{sec:construction}
%
Obviously, any martingale   with cone in $[0,1]$  is of the form
$\E[\zeta \vert \filF_t]$ for a random variable $\zeta$, valued in
$[0,1]$. However, it is not possible to  compute the
diffusion coefficient of such martingales, since these martingales
are not always diffusion processes.

The above section shows how martingales evolving in the
compact set $[a,b]$ can be obtained, by adequately choosing the
diffusion term $\sigma(t,x)$ in eq.~(\ref{eq1}). This framework is
interesting theoretically but may be hard to deal with in
practice. To illustrate this, suppose we want to construct a
martingale in $[0,1]$. To this end, we can choose
$Y_0\in[0,1]$, $\sigma(t,x)=g(t)h(x)> 0$ for all $t>0$ and
$x\in(0,1)$ where $h =0 $ for $x\notin(0,1)$. One simple
``smooth function'' satisfying the required conditions is
\beq \sigma(t,x)=\eta x(1-x) \nonumber
\eeq
where $\eta$ is a constant, which is proven to satisfy the
conditions of Theorem~\ref{th:LipTimeDep}, and $Y\in[0,1]$ if
$Y_0\in [0,1]$.\footnote{In the sequel, we shall deal with processes that belong to $[0,1]$ almost surely. In that case, we shall restrict ourselves to specify the diffusion coefficient $\sigma(t,x)$ on $\mathbb{R}^+\times [0,1]$. Of course, the later can trivially be extended to $x\in\mathbb{R}$ via the indicator function $\ind_{\{x\in(0,1)\}}$. This preserves the dynamics of the process and allows us to rely on existence and uniqueness results, which require the SDE coefficients to be defined for $x\in\mathbb{R}$; see e.g. proof of Corollary~\ref{cor:sepmart}} This leads to the SDE:
\beq
dY_t=\eta Y_t(1-Y_t) dW_t,~~Y_0\in[0,1]\label{eq:y1my}
\eeq
Theorem~\ref{th:LipTimeDep} ensures that this SDE admits a
solution which, from Corollary~\ref{cor:sepmart}, is a
martingale. Moreover, because $\sigma(t,0)=\sigma(t,1)=0$ and
$\sigma(t,x)>0$ for $x\in(0,1)$, the range of this process is
$[0,1]$ provided that $Y_0\in[0,1]$; it is thus a conic martingale
with cone $[0,1]$, as per Definition~\ref{def:conproc}. However,
even if numerical schemes can be worked out to estimate the
distribution of $Y_t$, the analytical expression of such atypical
SDE may not be trivial to find, if existing. Moreover, from a
practical perspective, such schemes need to guarantee that all
paths (for Monte-Carlo simulation, or the range of the
distribution, for PDE solver) of $Y$ remains in the cone
associated to the theoretical solution (that one may guess from
the SDE). Generally speaking,  implicit schemes satisfying the
boundedness conditions are required, but can be tedious to find
out.

 To address these two issues, we propose a specific
construction scheme that yields the conic martingale $Y $ as a
transformation of a simpler (unconstrained, or ``free'') process
$X $ via some smooth functional $Y_t=F(t,X_t)$. The SDE and
statistics of $Y $ can thus directly be obtained through those of
$X $.

Here below we first show how one can create one-dimensional martingale with cone $\mathcal{S}=[0,1]$.

\subsection{Methodology}
\label{subsec:construction:cstcone}
%
We now proceed with the next result, which is an important tool
for constructing conic martingales.

We shall need  conditions ensuring existence and uniqueness of
solutions to generic SDE of the form
\beq
dX_t = \mu(t,X_t)dt+\eta (t,X_t) dW_t,\label{eq:LatProc00}
\eeq
which  can be found (see, e.g., \cite{Osk03},\cite{Kloed99},
\cite[Sections 1.5.4 and 1.5.1]{jyc:3m}, \cite[Chapter IV, Section
3]{RevYor99}).
\begin{theorem}[Autonomous Mapped Martingales]\label{th:AMM}
Let $F(x):dom(F)\to[0,1],x\mapsto F(x)$ be a strictly monotonic
function of class $\mathcal{C}^2$ with bounded first
derivative. Note $f(x):=F'(x)$ and $f'(x)=F''(x)$. Let $\eta$
be a function defined on  $\mathbb{R}^+\times dom(F)$. Assume that there
exists a process $X $ with $R(X_t)\in \dom(F)$ solution of the
stochastic differential equation (SDE)
\beq
dX_t=\frac{\eta^2(t,X_t)}{2}\psi(X_t) dt + \eta(t,X_t)dW_t\label{eq:LatProc0}
\eeq
where $\psi(x):= -\frac{f'(x)}{f(x)}$ is the \textit{score function} associated to $F$. Then, the process
\beq Y_t=F(X_t)~,~~t\geq 0 \nonumber\eeq
is a martingale in $[0,1]$. If the range of $X$ coincides with $dom(F)$, the range of $Y$ is $[0,1]$. In this case, $Y $ is called a \textit{conic martingale with cone $[0,1]$} or equivalently,  a
\textit{$[0,1]$-\textit{martingale}}.
\end{theorem}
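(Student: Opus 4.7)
The plan is to apply It\^o's formula to $Y_t = F(X_t)$, show that the drift cancels by the very definition of $\psi$, and then upgrade the resulting continuous local martingale to a true martingale by boundedness.

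First, since $F$ is $\mathcal{C}^2$ and $X$ is a continuous semimartingale with $d\langle X\rangle_t = \eta^2(t,X_t)\,dt$, It\^o's formula gives
\beq
dY_t \;=\; F'(X_t)\,dX_t \,+\, \tfrac{1}{2}F''(X_t)\,d\langle X\rangle_t \;=\; \frac{\eta^2(t,X_t)}{2}\bigl[f(X_t)\psi(X_t)+f'(X_t)\bigr]dt \,+\, f(X_t)\eta(t,X_t)\,dW_t. \nonumber
\eeq
Substituting $\psi(x)=-f'(x)/f(x)$ (which is well-defined because $F$ is strictly monotonic, so $f=F'$ never vanishes on $\dom(F)$) makes the bracket vanish identically, leaving the driftless representation $dY_t = f(X_t)\eta(t,X_t)\,dW_t$.

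Next I would argue that $Y$ is a genuine martingale on $[0,T]$. Because $f$ is bounded by hypothesis, $\int_0^t f^2(X_s)\eta^2(s,X_s)\,ds<\infty$ $\Q$-a.s., so $Y$ is well-defined as a continuous local martingale. Moreover $F$ takes values in $[0,1]$, hence $Y_t=F(X_t)\in[0,1]$ for all $t\geq 0$, so $Y$ is uniformly bounded. Theorem~\ref{th:locmboundm} then promotes this bounded local martingale to a true martingale.

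For the statement on the range, I would use continuity and strict monotonicity of $F$. Since $F$ is a continuous bijection between $\dom(F)$ and $[0,1]$, and $Y_t=F(X_t)$, the range of $Y_t$ equals $F(R(X_t))$. When $R(X_t)=\dom(F)$, this image is exactly $[0,1]$, so the cone of $Y$ is $[0,1]$ and $Y$ qualifies as a $[0,1]$-martingale per Definition~\ref{def:conproc}. The only nontrivial point in the whole argument is the drift cancellation, which however is automatic from the construction of $\psi$; the rest is a routine combination of boundedness and Theorem~\ref{th:locmboundm}.
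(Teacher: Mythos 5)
Your proof is correct and follows essentially the same route as the paper's: apply It\^o's formula to $F(X_t)$, observe that the definition of $\psi$ cancels the drift, and invoke Theorem~\ref{th:locmboundm} to upgrade the bounded local martingale to a true martingale. Your write-up is in fact more explicit than the paper's (which omits the drift computation and the range argument), but there is no substantive difference in approach.
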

\begin{proof} The process $Y =F(Z)$, valued in$[0,1]$,  has dynamics  given via It\^o's lemma:
\beq
dY_t= f(Z_t)\eta(t,Z_t)dW_t\nonumber
\eeq

The process $Y$ is then a bounded local martingale hence a martingale.
\end{proof}
Notice that since $F$ is a bijection, it is invertible, and the
dynamics for $Y $ becomes
\beq dY_t=f\circ
F^{-1}(Y_t)\eta(t,F^{-1}(Y_t))dW_t=:\sigma(t,Y_t)dW_t\label{eq:SDEYonly}
\eeq
so that $Y$ is a diffusion.

\begin{corollary}
Let $F$ be a bijection of class $\mathcal{C}^2$ and $X$ be the solution to eq.~(\ref{eq:LatProc0}). Then $Y=F(X)$ is a
conic martingale with cone $[0,1]$ satisfying the
SDE~(\ref{eq:SDEYonly}). Moreover, the cumulative distribution
function of $Y_t$ ($F_{Y_t}(y)$) can be obtained form that of the latent random variable $X_t$. In
particular, if $F$ is increasing:
\beq
F_{Y_t}(y)=F_{X_t}(F^{-1}(y))\label{eq:CDFtransform}
\eeq
\end{corollary}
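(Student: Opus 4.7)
The plan is to reduce the statement to a direct application of Theorem~\ref{th:AMM} followed by a one-line probabilistic computation. The corollary makes two separate claims, so I would treat them sequentially.

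For the first claim, I would argue that because $F:\dom(F)\to[0,1]$ is a $\mathcal{C}^2$ bijection, it is automatically strictly monotonic on its domain (a continuous injection between intervals), so the hypotheses of Theorem~\ref{th:AMM} are met (with the bounded-$f$ condition inherited from the ambient setting of that theorem). This immediately gives that $Y=F(X)$ is a martingale taking values in $[0,1]$. To upgrade this to the \emph{conic} property, I would note that bijectivity plus monotonicity of $F$ imply $R(Y_t)=F(R(X_t))$; since $X$ is assumed to have range equal to $\dom(F)$, we obtain $R(Y_t)=[0,1]$, and the non-decreasing character of the sequence $(R(X_t))_t$ transfers to $(R(Y_t))_t$ under the monotonic continuous map $F$. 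To obtain the SDE~(\ref{eq:SDEYonly}), I would reprise the It\^o computation from the proof of Theorem~\ref{th:AMM}: the drift of $X$ in eq.~(\ref{eq:LatProc0}) has been rigged precisely so that the $\frac12 f'\eta^2 + f\mu$ term from It\^o's formula vanishes, leaving $dY_t=f(X_t)\eta(t,X_t)dW_t$. Bijectivity of $F$ then allows the substitution $X_t=F^{-1}(Y_t)$, producing $\sigma(t,Y_t):=f\circ F^{-1}(Y_t)\,\eta(t,F^{-1}(Y_t))$ as in eq.~(\ref{eq:SDEYonly}); this also shows $Y$ is a diffusion.

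For the CDF identity, the argument is a single line. When $F$ is strictly increasing, so is $F^{-1}$, and hence
\[
\{Y_t\leq y\}=\{F(X_t)\leq y\}=\{X_t\leq F^{-1}(y)\},
\]
from which $F_{Y_t}(y)=\Q\{X_t\leq F^{-1}(y)\}=F_{X_t}(F^{-1}(y))$, which is exactly~(\ref{eq:CDFtransform}). The decreasing case would yield the complementary $1-F_{X_t}(F^{-1}(y))$, but is not part of the claim.

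There is really no substantive obstacle: the corollary is almost entirely bookkeeping on top of Theorem~\ref{th:AMM} and It\^o's formula. The only mildly delicate point is making sure the hypotheses of Theorem~\ref{th:AMM} are not silently strengthened here—specifically, that "$\mathcal{C}^2$ bijection" still carries the bounded-derivative assumption on $f=F'$ from the hypothesis of Theorem~\ref{th:AMM}. This is innocuous in practice because the bijections $F:\mathbb{R}\to[0,1]$ used in the sequel (e.g.\ standard CDFs) have bounded $f$, but it should be flagged so that the invocation of Theorem~\ref{th:AMM} is unambiguous.
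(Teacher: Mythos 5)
Your argument is correct and matches the paper's intent exactly: the paper gives no separate proof of this corollary, treating it as an immediate consequence of Theorem~\ref{th:AMM} (It\^o's lemma killing the drift, boundedness giving the true martingale property), the substitution $X_t=F^{-1}(Y_t)$ justified by bijectivity to obtain~(\ref{eq:SDEYonly}), and the one-line monotonicity computation for~(\ref{eq:CDFtransform}). Your remark that the bounded-derivative hypothesis on $f=F'$ must be carried over from Theorem~\ref{th:AMM} is a fair and worthwhile clarification, but it does not change the argument.
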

\begin{remark}
Simple candidates for function $F$ are cumulative distribution (or
survival distribution) functions defined on the real line,
admitting a continuously differentiable density and invertible.
\end{remark}
%
\ifdefined \inclTDC
    \subsection{Examples (constant cone)}
\else
    \subsection{Examples}
\fi
\label{subsec:construction:ex}
%
\begin{enumerate}
\item Let $F(x)= e^{-\lambda x}$ and $\eta(t,x)=\eta x$. Then
 $X$ satisfies a variant of the \textit{Verhulst} equation:
\beq dX_t=\lambda (\eta^2/2)   X^2_tdt+\eta X_t dW_t\label{eq:ExpSDE}
\eeq
with solution  
\beq X_t=\frac{\Theta_t}{1- (\lambda\eta^2/2)
\int_{0}^t\Theta_sds}~,~~\Theta_t:=
X_0\e^{-(\eta^2/2) t +\eta W_t} 
\eeq
up to explosion time $\tau:=\inf\{t\,:\, \int_{0}^t\Theta_sds =2/ \lambda\eta^2\}$.

The process $Y$ defined as $Y_t=\exp(-\lambda X_t), t\leq  \tau$ defines  a martingale (valued in $[0,1]$) with  SDE  given by
\beq dY_t=-\eta\ln(Y_t)Y_tdW_t\label{eq:lnY} \eeq Note that, by
construction, $\inf \{t\,:\,Y_t\in\{0,1\}\}=\inf
\{t\,:\,Y_t=0 \}=:\tau$. The boundary $1$ is not reached (up to $\tau$) and
$Y_{\tau\vee t}=0$.

 \item Let us come back to the
SDE~(\ref{eq:y1my}). We can see that it consists in
eq.~(\ref{eq:SDEYonly}) with $\sigma(t,x)=\eta x(1-x)$. Setting
$\eta(t,x)=\eta$ in~(\ref{eq:LatProc0}), it appears that we must
have $f(F^{-1}(y))=y(1-y)$. Changing the variable
$x=F^{-1}(y)$ leads to the (logistic) first-order non-linear
differential equation
\beq f(x)=\frac{d F(x)}{dx}= F(x)(1-F(x))\nonumber \eeq
which solution is proven to be
\beq F(x)=\frac{c\e^{x}}{1+c\e^{ x}}~,~~\psi(x)=2F(x)-1\nonumber
\eeq
This function is the cumulative distribution function of a logistic random variable with mean $-\ln(c)$ and variance $\pi^2/3$.\footnote{Note that since $c\e^{X_0}=F(X_0)/(1-F(X_0))$ and $\dom(F)=[0,1]$, we must have $c>0$.}
In other words, it appears that the conic martingale process $Y$ defined by eq.~(\ref{eq:y1my}) can be obtained by simply mapping through the above (distribution) function $F$ the unconstrained (latent) process $X$ (defined by the SDE~(\ref{eq:LatProc0})) which instantaneous variance is set constant $\eta^2(t,x)=\eta^2$ and the drift, as per Theorem~\ref{th:AMM}, given by $\frac{\eta^2}{2}\psi(x)$ where $\psi(x)=-\frac{f'(x)}{f(x)}=2F(x)-1=\tanh(x/2)$ is the score function associated to the above distribution function. In the specific case where $c=1$, the SDE of the latent process writes
\beq dX_t=\frac{\eta^2}{2} \tanh\left(\frac{X_t}{2}\right)dt+\eta
dW_t\label{eq:tanhX} \eeq 
The coefficients of this SDE satisfy the usual conditions ensuring existence and uniqueness of a solution $X$, so that the solution $Y$ to SDE~(\ref{eq:lnY}) exists and is unique, too.
\item It is obvious that a same SDE for $Y=F(X)$
can be obtained from various combinations of $(X,F)$. In the
above example, $c$ can be chosen to be any positive scalar, but
provided that the correct drift (score function) is used for
$X$, the same SDE is obtained for $Y$. Similarly, a given
latent process $X$ can lead to several driftless equations for
$Y=F(X)$, depending on the choice of $F$. This results from
the fact that different mappings $F$ can lead to the same score
function. Therefore, for a given drifted process $X$, one can
find several mappings $F$ such that the resulting SDE of
$Y=F(X)$ is driftless. For instance, setting $F(x)=\tanh(x/2)$
leads to $\sigma(t,y)=\frac{\eta}{2}(1-y^2)$:
\beq
dY_t=\frac{\eta}{2}(1-Y_t^2)dW_t\label{eq:tanhSDE}
\eeq
The score function of $F(x)=\tanh(x/2)$ is equal to $\tanh(x/2)$
as well, which is the same as that of $F(x)=\e^x/(1+\e^x)$ that
led to SDE~(\ref{eq:y1my}).
\item Consider the standard Gaussian
case, $F(x)=\Phi(x)$. The score function of the Gaussian
distribution is given by $\psi(x)=-\phi'(x)/\phi(x)=x$ where
$\phi(x)$ is the standard Normal density function, and we get that
$Y=\Phi(X)$ is a $[0,1]$-martingale provided that the SDE
\beq dX_t=\frac{\eta^2(t,X_t)}{2} X_t dt+\eta(t,X_t)dW_t
\label{Xgaus}
\eeq
has a solution and $Y$ has diffusion coefficient
\beq
\sigma(t,y)=\phi\circ\Phi^{-1}(y)\eta\left(t,\Phi^{-1}(y)\right)\label{eq:PhiSDE}
\eeq
\end{enumerate}

\subsection{Another conic martingale}
\label{sec:tdc}

The standard Normal distribution function $\Phi$ can be used to
turn any It\^o integral into a martingale in $[0,1]$ with given
initial value $x\in(0,1)$.

Define
\beq
 Z_t:=z+\int_0^t\sigma_sdW_s\nonumber
\eeq
where $z\in\mathbb{R}$ and $\sigma $ is adapted to the natural
filtration of $W$.
\begin{lemma} Let $x\in(0,1)$ and set $z:=\Phi^{-1}(x)$.  For
$\Upsilon :=1/\sqrt{1-\left[ Z\right] }$, setting $\tau =
\inf\{t\,:\, [Z]_t=1\}$, the stochastic process $\mathcal{M}(Z)$ defined as
\beq
\mathcal{M}(Z )_t:=\Phi\left(\Upsilon_t Z _t\right), \, t<\tau\label{eq:DDPhi}
\eeq
is a martingale in $[0,1]$ with initial value $x$. Moreover, if $R(X_t)=\mathbb{R}$, then $\mathcal{M}(Z )_t$ is a $[0,1]$-martingale.
\end{lemma}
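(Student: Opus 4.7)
The plan is to apply Itô's formula directly to $\mathcal{M}(Z)_t = H(t,Z_t)$ where $H(t,z):=\Phi\bigl(z/\sqrt{1-q(t)}\bigr)$ and $q(t):=[Z]_t=\int_0^t\sigma_s^2 ds$ on the random interval $[0,\tau)$, and show that the explicit-time drift and the Itô correction term cancel, leaving a driftless dynamics. Since $\mathcal{M}(Z)$ takes values in $(0,1)$, Theorem~\ref{th:locmboundm} will then upgrade the local martingale property to a genuine martingale on $[0,\tau)$.

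More concretely, I would compute the three partial derivatives of $H$. Writing $\varphi := \phi(z/\sqrt{1-q(t)})$ and using $\phi'(u)=-u\phi(u)$, the key terms are
\begin{equation}
\partial_t H = \frac{z\,\sigma_t^2}{2(1-q(t))^{3/2}}\,\varphi,\qquad \partial_{zz}H = -\frac{z}{(1-q(t))^{3/2}}\,\varphi.\nonumber
\end{equation}
Applying Itô to $H(t,Z_t)$ then gives
\begin{equation}
d\mathcal{M}(Z)_t = \partial_t H\,dt + \partial_z H\,dZ_t + \tfrac{1}{2}\partial_{zz}H\,d[Z]_t = \frac{\sigma_t}{\sqrt{1-q(t)}}\,\phi(\Upsilon_t Z_t)\,dW_t,\nonumber
\end{equation}
the $dt$ contributions canceling exactly because $d[Z]_t = \sigma_t^2\,dt$. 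Thus $\mathcal{M}(Z)$ is a continuous local martingale on $[0,\tau)$.

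Boundedness then does the rest: $\mathcal{M}(Z)_t\in(0,1)$ by construction, so Theorem~\ref{th:locmboundm} (every bounded local martingale is a martingale) applies to any stopped version $\mathcal{M}(Z)^{t\wedge\tau_n}$ with $\tau_n\uparrow\tau$, and the uniform bound lets us pass to the limit to obtain the martingale property on $[0,\tau)$. The initial value is immediate, $\mathcal{M}(Z)_0=\Phi(\Upsilon_0 z)=\Phi(z)=x$. For the range statement, if $R(Z_t)=\mathbb{R}$ then $\Upsilon_t Z_t$ has full real range and, since $\Phi$ is a continuous bijection $\mathbb{R}\to(0,1)$ and the range is defined as the closed support of the distribution, $R(\mathcal{M}(Z)_t)=[0,1]$; monotonicity of ranges in $t$ follows from Corollary~\ref{conicmart}, making $\mathcal{M}(Z)$ a $[0,1]$-martingale.

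The only delicate step I foresee is the behavior at the (possibly random) stopping time $\tau=\inf\{t:[Z]_t=1\}$: one has to argue carefully that the local martingale property and the bound hold strictly before $\tau$ and justify the localization, which is what the statement ``$t<\tau$'' anticipates. As an independent sanity check, one could alternatively invoke Dambis--Dubins--Schwarz to write $Z_t=z+B_{[Z]_t}$, recognize $\Phi\bigl((z+B_u)/\sqrt{1-u}\bigr)=\mathbb{Q}\{z+B_1>0\mid \mathcal{G}_u\}$ as a closed bounded martingale in $u\in[0,1)$, and then time-change back; this route makes the uniform integrability transparent but packages away the Itô computation that is otherwise the heart of the result.
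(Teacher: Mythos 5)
Your proposal is correct and follows essentially the same route as the paper: an It\^o computation showing the $dt$ terms cancel (the paper organizes it via $d\Upsilon_t=\frac{\Upsilon_t^3}{2}d[Z]_t$ rather than via the explicit partials of $H(t,z)$, but it is the same calculation yielding $d\mathcal{M}(Z)_t=\Upsilon_t\,\phi(\Upsilon_t Z_t)\,dZ_t$), followed by Theorem~\ref{th:locmboundm} to upgrade the bounded local martingale to a genuine martingale. Your added care about localization before $\tau$ and the Dambis--Dubins--Schwarz sanity check go beyond what the paper writes, but they do not change the argument.
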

\begin{proof}
It is obvious to see that $\mathcal{M}(Z  )\in[0,1]$ and that
$\mathcal{M}(Z )_0=x$ since $\Upsilon_0=1$ and $Z _0=z$. From
Theorem~\ref{th:locmboundm}, it remains to show that it is a local
martingale. This is straightforward using the  property
$\Phi''(x)=-x\Phi'(x)=:-x\phi(x)$ and the fact that
$d\Upsilon_t=\frac{\Upsilon_t^3}{2}d\left[ Z\right]_t$. Indeed,
from It\^o's lemma,
\beqn
\frac{d\mathcal{M}(Z )_t}{\phi\circ\Phi^{-1}(\mathcal{M}(Z )_t)}&=&Z _td\Upsilon_t+\Upsilon_tdZ_t-
\frac{\Upsilon_tZ _t}{2}\Upsilon_t^2d\left[Z\right]_t\nonumber\\
&=& \Upsilon_tdZ_t \nonumber
\eeqn
\end{proof}

The mapping $\mathcal{M}$ turns any continuous local martingale $Z $ defined on $\mathbb{R}$ into a $[0,1]$-martingale $\mathcal{M}(Z )_t=\Phi(Z_t/\sqrt{1-[Z]_t})$ using function $\Phi$. This is similar to the Dol\'eans-Dade exponential $\mathcal{E}$, which maps $Z $ to a non-negative martingale $\mathcal{E}(Z )_t=\exp\{Z_t-[Z]_t/2\}$ using the exponential function provided that $Z$ satisfies the Novikov condition. This results from the connections between first and second derivatives of these functions. From this perspective, the martingale $\mathcal{M}(Z )$ can be seen as the equivalent of $\mathcal{E}(Z )$ but for $[0,1]$-martingales instead of $\mathbb{R}^{+}$-martingales.

\begin{remark}An important point is that the process $\mathcal{M}(Z )$
reaches the bounds if and only if $\left[ Z\right]$ can reach 1.
Assume for instance a constant diffusion coefficient
$\sigma_t=\eta\in\mathbb{R}_0^+$. Then, $[Z]_t=\eta^2 t$ so that
$\frac{ Z_t}{\sqrt{1-\left[ Z\right]_t}}$ is a.s. finite for
$t<\eta^{-2}$ but $\mathcal{M}(Z )_{\eta^{-2}}\in\{0,1\}$ a.s.,
i.e. we reach (and stick to) one of the boundaries at
$t=1/\eta^2$. On the other hand, $\mathcal{M}(Z  )$ cannot reach
the boundaries if we choose $\sigma_t=\sigma(t)=\eta\e^{-\eta^2
t/2}$ since $\int_0^\infty\sigma_s^2ds=1$. Moreover, the
process $\mathcal{M}(Z  )$ is a diffusion if $\sigma_t=\sigma(t)$
is a deterministic function of time. In that case, the diffusion
coefficient of $\mathcal{M}(Z  )$ is separable in the sense of
eq.~(\ref{eq:SDEySep}) with
$g(t)=\sigma(t)\Upsilon_t=\sigma(t)/\sqrt{1-\int_0^t \sigma^2(s)ds}$
and $h(x)=\phi\circ\Phi^{-1}(x)$.
\end{remark}
%
\subsection{Practical considerations}
%
As explained above, conic martingales can be obtained by
specifying the form of the diffusion coefficient $\sigma(t,y)$.
However, the resulting SDE's are most often not analytically
tractable, so that numerical schemes need to be used.

The above considerations suggest that it may be better to first
(analytically or numerically) try to solve the SDE of an
underlying free process $X=G(Y)$, and then get the solution of $Y$
via the mapping $F=G^{-1}$. If $G$ is chosen in a clever way, it
could be that the second SDE is more easy to deal with (more
standard, evolving in $\mathbb{R}$ instead of e.g. $[0,1]$). At
least we can get the correct range. This is the purpose of the
next theorem, which tells us how to choose $G=F^{-1}$ so that
$X_t=G(Y_t)$ takes a specific, more appealing form. More
specifically, the developed methodology allows us to write the
solution (if it exists) of the SDE~(\ref{eq1}) when the
instantaneous volatility is separable (in the sense of
eq.~(\ref{eq:SDEySep})) as a mapping $F$ of another process with
specific drift but diffusion coefficient $g(t)$ depending on time
only. The below theorem states sufficient conditions for the
solution $Y$ to the SDE~(\ref{eq1}) to be given by $F(X)$ where
$X$ is as desired. Moreover, (i) we are told which $F$ we have to
choose in order for $X_t=F^{-1}(Y_t)$ to have the required
dynamics, and (ii) the SDE of $X$ is completely specified.
\begin{theorem}
Consider the SDE~(\ref{eq1}) where
the diffusion coefficient is separable in the sense
of~(\ref{eq:SDEySep}). Assume function $g$ satisfies
$g^2(t)<\infty$ for all $t$ and $h(y)$ is strictly positive of
class $\mathcal{C}^2$ in the set
$\mathcal{A}\setminus\partial\mathcal{A}$ and vanishes at the
(existing) boundaries $\partial\mathcal{A}$ of $\mathcal{A}$. Let
$y=F(x)\subseteq\mathcal{A}$ solve the first order autonomous
non-linear ODE\footnote{The solution to this ODE is proven to have
the general form $y(x)=H(x+k)$ where $k$ is the integration
constant and $H(x)$ is the inverse of $\int_{\inf \dom(h)}^x
\frac{1}{h(u)}du$}
\beq
\frac{dy}{dx}=h(y)\label{eq:ODE1}
\eeq
If $\psi(x)=-F''(x)/F(x)$ is Lipschitz continuous, then~(\ref{eq1}) admits the strong pathwise unique solution $Y_t=F(X_t)$ where $X$ is the unique strong solution to the SDE~(\ref{eq:LatProc0}) with initial value $X_0:=F^{-1}(Y_0)$, diffusion coefficient $\eta(t,x)=g(t)$ and drift $\mu(t,x)=-\frac{g^2(t)F''(x)}{2F'(x)}=\frac{g^2(t)}{2}\psi(x)$.
\end{theorem}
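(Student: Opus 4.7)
The plan is direct: construct $Y_t := F(X_t)$, verify by It\^o's formula that it solves~(\ref{eq1}), and deduce pathwise uniqueness by mapping any competitor back through $F^{-1}$.

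First I would justify the auxiliary ingredients. Since $h>0$ on $\mathcal{A}\setminus\partial\mathcal{A}$, the autonomous ODE $y'(x)=h(y)$ produces a strictly increasing $\mathcal{C}^2$ solution $F$, so $F$ is a bijection from its (open) domain onto an open subset of $\mathcal{A}$ and $F^{-1}$ is itself $\mathcal{C}^2$ on $F(\dom F)$. The assumption that $\psi$ is Lipschitz continuous, together with the deterministic and locally square-integrable coefficient $g$, places the SDE
\beq
dX_t=\tfrac{g^2(t)}{2}\psi(X_t)\,dt+g(t)\,dW_t,\qquad X_0=F^{-1}(Y_0)\nonumber
\eeq
squarely within the classical framework cited for~(\ref{eq:LatProc00}), so that it admits a pathwise unique strong solution $X$.

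Next I would apply It\^o's formula to $Y_t=F(X_t)$. Because $F'(x)=h(F(x))$ by the defining ODE, the diffusion part reads $F'(X_t)g(t)\,dW_t=g(t)h(Y_t)\,dW_t=\sigma(t,Y_t)\,dW_t$, which matches eq.~(\ref{eq1}). Using $\psi=-F''/F'$, the drift collapses identically to zero:
\beqn
F'(X_t)\mu(t,X_t)+\tfrac{1}{2}F''(X_t)g^2(t) &=& \tfrac{g^2(t)}{2}F'(X_t)\psi(X_t)+\tfrac{g^2(t)}{2}F''(X_t)\nonumber\\
&=& -\tfrac{g^2(t)}{2}F''(X_t)+\tfrac{g^2(t)}{2}F''(X_t)=0.\nonumber
\eeqn
Combined with $Y_0=F(F^{-1}(Y_0))$, this shows that $Y=F(X)$ is a strong solution to~(\ref{eq1}).

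For pathwise uniqueness I would argue in the reverse direction. Any strong solution $\widetilde Y$ of~(\ref{eq1}) starting in the interior of $\mathcal{A}$ stays in $\mathcal{A}$ by the argument of Corollary~\ref{cor:sepmart} (since $h$ vanishes on $\partial\mathcal{A}$). Hence $\widetilde X_t:=F^{-1}(\widetilde Y_t)$ is well-defined and, by a second application of It\^o's formula on a localising sequence $\tau_n\uparrow\infty$ keeping $\widetilde Y$ away from $\partial\mathcal{A}$, solves the same SDE as $X$ with the same initial value; pathwise uniqueness for the $X$-equation then forces $\widetilde X=X$ and hence $\widetilde Y=Y$. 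The main technical obstacle is precisely this localisation: $F^{-1}$ may blow up near $\partial\mathcal{A}$, so one must show that either $\widetilde Y$ never hits the boundary, or, if it does at some time $\tau$, it remains stuck there thereafter (matching the explosion behaviour of $X$ at $\tau$). This is where the assumption $h\equiv 0$ on $\partial\mathcal{A}$ is essential, as in the proof of Corollary~\ref{cor:sepmart}.
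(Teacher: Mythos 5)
Your proposal is correct and follows essentially the same route as the paper: establish regularity and invertibility of $F$ from the ODE $F'=h\circ F$, transfer existence and pathwise uniqueness from the $X$-equation (whose coefficients satisfy the classical Lipschitz/linear-growth conditions thanks to the Lipschitz assumption on $\psi$ and the boundedness of $g^2$), and connect the two SDEs by It\^o's formula. The only difference is one of care rather than of method: the paper applies It\^o to $F^{-1}(Y)$ and simply asserts that the unique solution of~(\ref{eq1}) is $Y=F(X)$, whereas you separate the existence direction (It\^o on $F(X)$) from the uniqueness direction (mapping a competitor back through $F^{-1}$) and correctly flag the localisation near $\partial\mathcal{A}$ as the point that still needs to be controlled --- a gap the paper's own proof leaves implicit as well.
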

\begin{proof}
Let us first prove that the solution $y=F(x)$ to the non-linear differential equation~(\ref{eq:ODE1}) is invertible, of class $\mathcal{C}^2$. Because $h(x)>0$ for all $x\in\mathcal{A}$ and $\im(F)\subseteq \dom(h)$, $F(x)=\int_{-\infty}^x h(F(u))du+k$ is continuously (strictly) increasing; $F(x)$ is therefore invertible, and continuous. Moreover, from the smoothness conditions on $h$, the first three derivatives of $F$ are continuous: $F'(x)=f(x)=h(F(x))$, $f'(x)=h'(F(x))f(x)=h'(F(x))h(F(x))$ and $f''(x)=h''(F(x))h^2(F(x))+h'^2(F(x))h(F(x))$. Therefore, the solution to the above ODE has the functional form $h(x)=f(F^{-1}(x))$, where $F$ has the required smoothness for It\^o's lemma to be used, and is invertible. It\^o's lemma yields the dynamics of $F^{-1}(Y_t)$, which corresponds to the SDE~(\ref{eq:LatProc0}) where $\eta(t,x)=g(t)$ and $\mu(t,x)=-\frac{g^2(t)}{2}\frac{f'(x)}{f(x)}$. From Theorem 4.5.3 of~\cite{Kloed99} (p. 131) this SDE, with finite initial value $X_0=F^{-1}(Y_0)$ has a strong pathwise unique solution since the coefficients meet the standard requirements (Lipschitz continuity of $\psi(x)$ together with the boundedness of $g^2(t)$ for $ t<\infty$ implies the linear growth bound condition on $g^2(t)\psi(x)$ and hence so is the drift coefficient $\mu(t,x)$). Finally, the solution $Y$ is given by the mapping $F$: $Y=F(X)$.
\end{proof}
\begin{example}
Consider the case of the exponential martingale with time
dependent volatility, with SDE $dY_t=\eta(t) Y_t dW_t$. Setting
$g(t)=\eta(t)$ and $h(x)=x$, we find $F(x)=\e^{x}+k$;
$\mu(t,x)=-\eta(t)^2/2$. In the case where $\eta(t)=\eta$, one
could equivalently choose $g(t)=1$ and $h(x)=\sigma x$, in which
case $F(x)=e^{\eta x} + k$ and $\mu(t,x)=-\eta/2$.
\end{example}
\begin{example}
This trick has been previously applied to the SDE~(\ref{eq:y1my})
in the case $g(t)=\eta$ and $h(x)=x(1-x)$. Similarly regarding
eq.~(\ref{eq:tanhSDE}), we can set $g(t)=\eta$ and
$h(x)=(1-x^2)/2$; the solution to the ODE~(\ref{eq:ODE1}) leads to
$F(x)=\tanh(x/2)$; therefore, the solution to~(\ref{eq:tanhSDE})
is given by $Y=\tanh(X/2)$ where $X$ is the solution
to~(\ref{eq:LatProc0}).
\end{example}
\begin{remark}
It is worth noting that although we obtain the SDE of
$X=F^{-1}(Y)$ from that of $Y$, the expression of $F^{-1}$ is not
needed; it does not enter the SDE of $X$. The drift of $X$ is
determined by the score function of $F$, which solves the ODE.
\end{remark}
%
\section{The $\Phi$-martingale}
\label{sec:Phi}
%

The previous computations done in  eq.~(\ref{Xgaus}) for $F=\Phi$
lead, for $\eta(t,x)=\eta$ to $dX_t= \frac{\eta^2}{2} X_tdt+\eta dW_t$, i.e. $X $
is a Vasicek process
\beq X_t=X_0\e^{\frac{\eta^2}{2}t}+\eta
\e^{\frac{\eta^2}{2}t}\int_{0}^t \e^{-\frac{\eta^2}{2}s}dW_s\label{eq:VasicekX} \eeq
with constant diffusion coefficient $\eta$, zero long-term mean
and negative speed of mean reversion $\eta^2/2$. Note that, for
fixed $t$, $X_t$ has the same law as \beq
X_0\e^{\frac{\eta^2}{2}t}+\sqrt{e^{\eta^2t}-1}Z  \eeq where $Z$ is
a standard Gaussian random variable.
This leads to a $[0,1]$-martingale $Y$ which analytical expression is $\Phi(X)$ where $X=(X_t)_{t\geq 0}$ is the Vasicek process~(\ref{eq:VasicekX}). The process $Y$ is called the $\Phi$-martingale. Sample paths drawn from this exact solution are shown in Fig.~\ref{fig:SamplePath}.
\begin{figure}
\begin{center}
\subfigure[$(Y_0,\eta)=(0.5,0.2)$]{\includegraphics[width=0.45\columnwidth]{./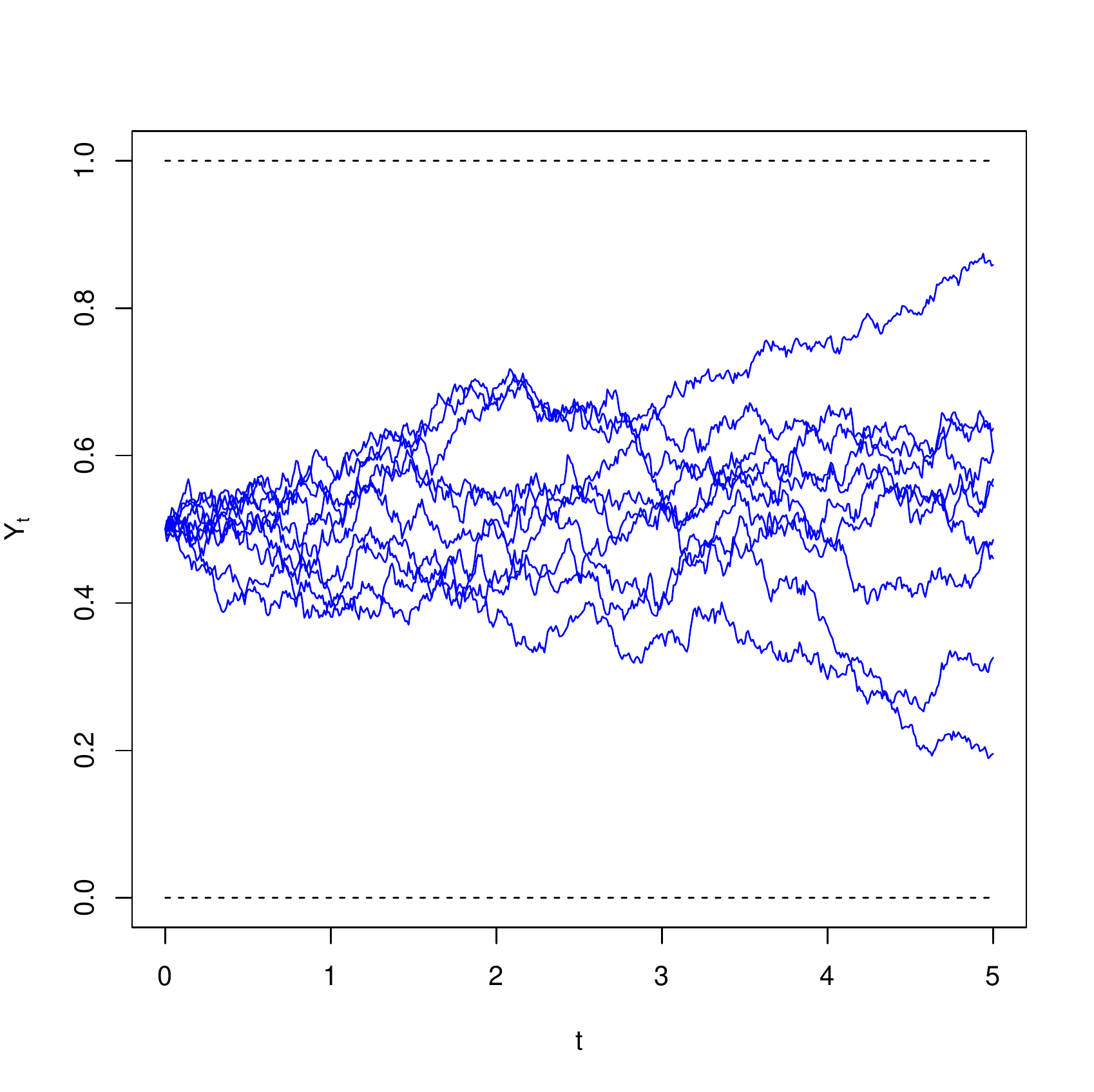}}\hspace{0.5cm}
\subfigure[$(Y_0,\eta)=(0.5,0.8)$]{\includegraphics[width=0.45\columnwidth]{./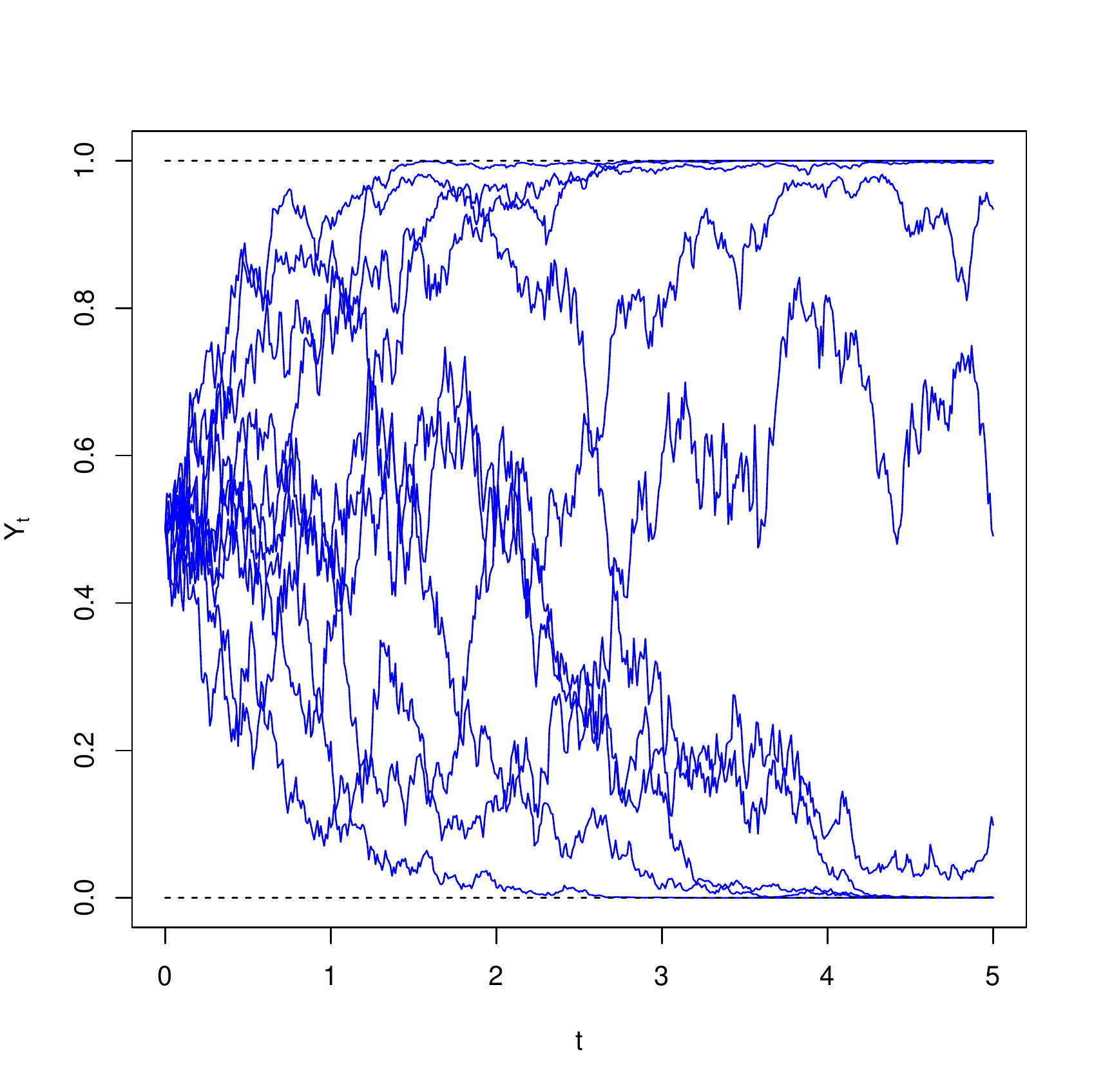}}\\
\subfigure[$(Y_0,\eta)=(0.75,0.2)$]{\includegraphics[width=0.45\columnwidth]{./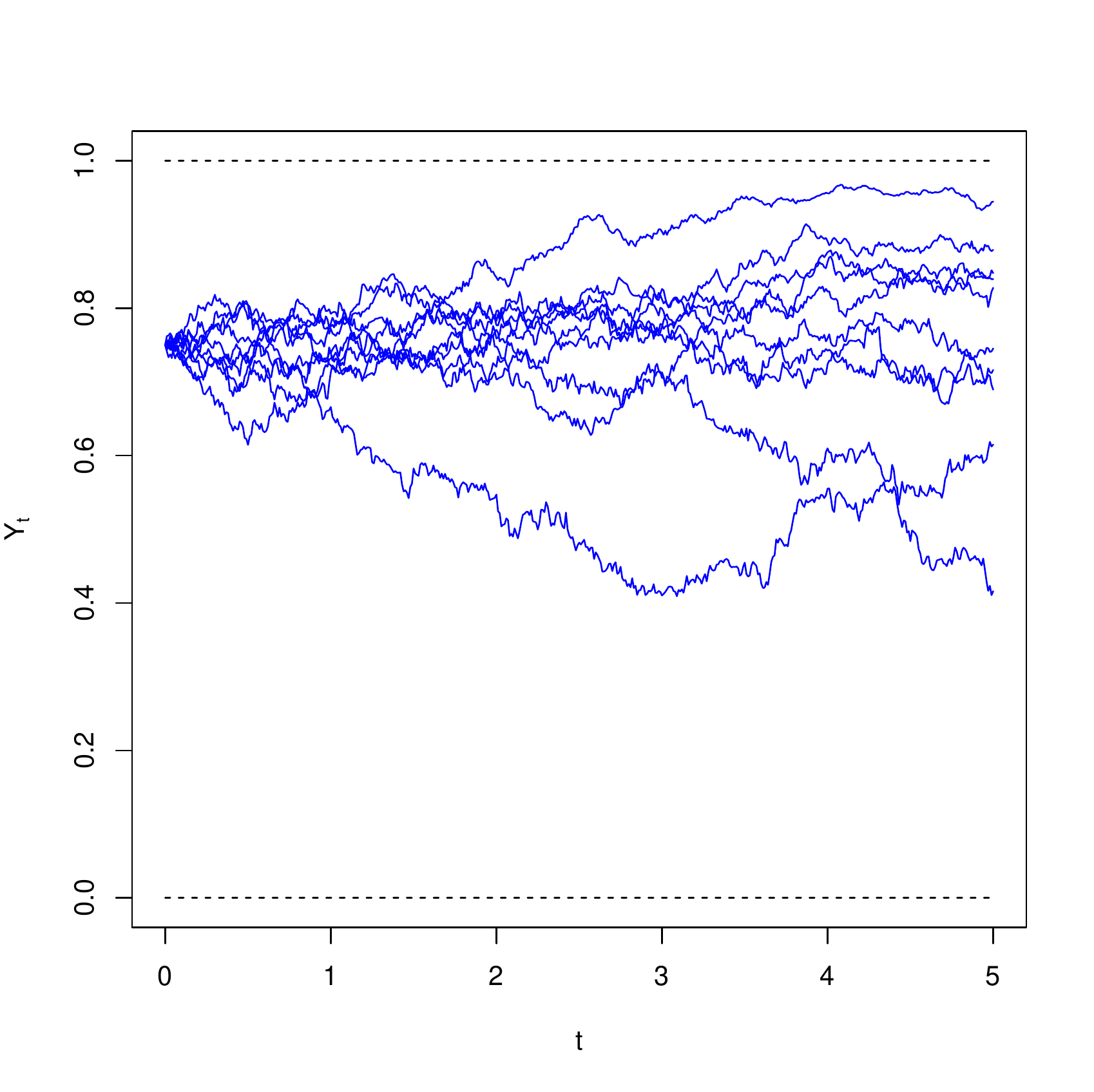}}\hspace{0.5cm}
\subfigure[$(Y_0,\eta)=(0.75,0.8)$]{\includegraphics[width=0.45\columnwidth]{./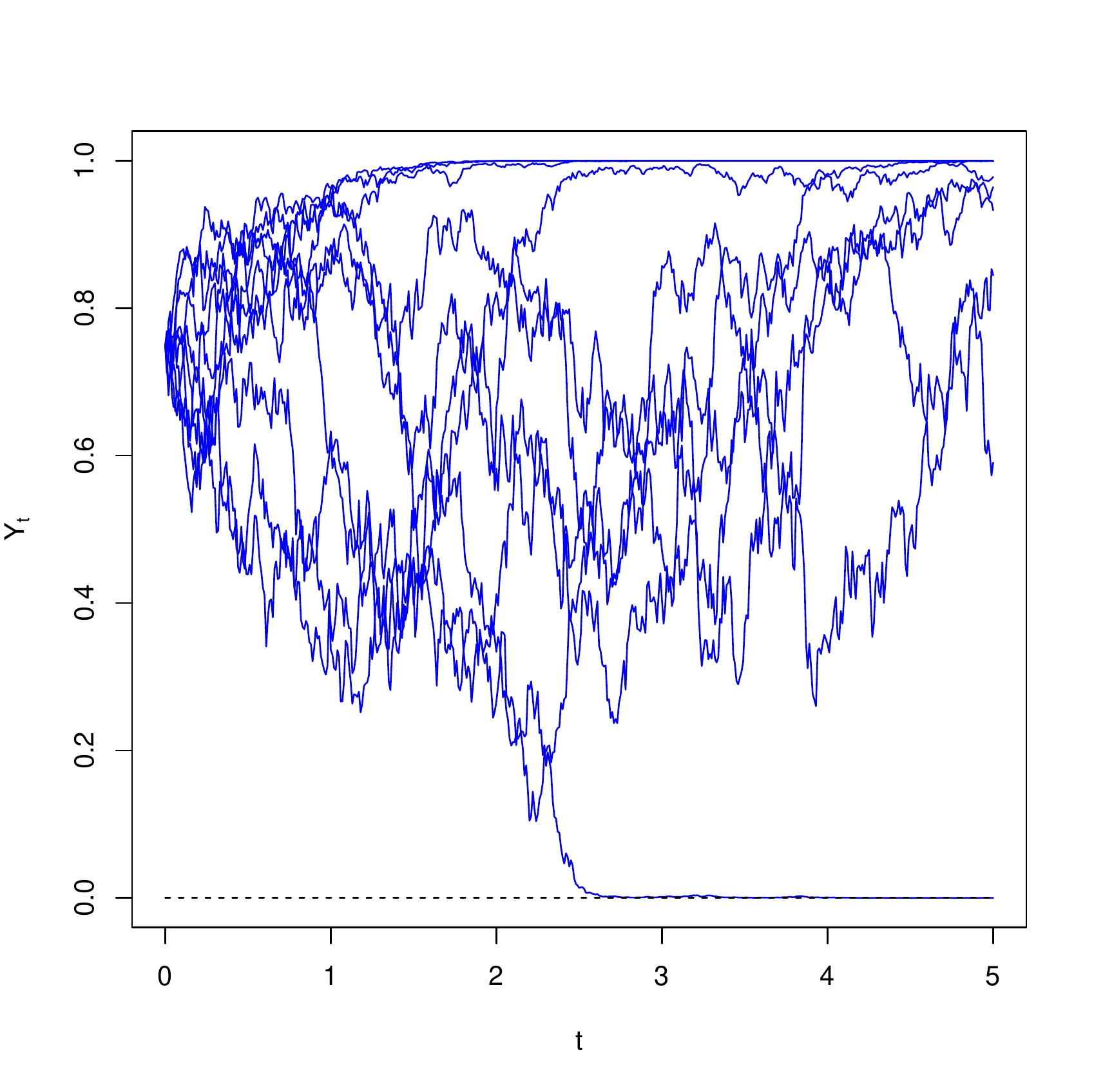}}
\caption{Ten sample paths of the $\Phi$-martingale $Y$ drawn from the exact solution $\Phi(X)$ for various diffusion scales $\eta$ and initial conditions $Y_0$.
}
\label{fig:SamplePath}
\end{center}
\end{figure}
\begin{remark}
It has been shown that the case~(\ref{eq:lnY}) is also tractable
since there is closed form expression for the latent process $X $ (and thus for $Y=\exp(-\lambda X)$). The joint density of $\left(\Theta_t,\int_{0}^t\Theta_sds\right)$ has been studied by Yor in~\citep{Yor92}, providing the law of $X_t$ (see Appendix). Howhever, the solution $X_t$ explodes at $\tau=\{t:\int_0^t\Theta_s ds = 2/(\lambda\eta^{2})\}$, and $\tau<\infty$ wp 1 as $\Theta_s$ is a grounded positive process. This means that the corresponding $Y_t=e^{-\lambda X_t}$, $\lambda>0$, will collapse to zero in finite time as well. This is not the case of the $\Phi$-martingale which merely \textit{asymptotically} collapses to the bounds, but belongs to $(0,1)$ $\mathbb{Q}$-a.s. for all $t>0$ (see Section~\ref{sub:asymptotics}).
\end{remark}

\subsection{Statistics and asymptotics}
\label{sec:distribution}
%
In the case where the SDE of the latent process $X$ (which drift is implied by $F$) has an
explicit solution, the process $Y$ can be studied in details.
For instance, the asymptotic distribution of $Y_t$ as $t\to\infty$
can be obtained. Moreover, one can also study the properties of
disjoint increments of $Y$. They have zero-mean and are
uncorrelated, as per the martingale property. Their variances and
quantile functions can be computed as well. We study below the statistics of the $\Phi$-martingale. For the sake of
comparison, we mention the corresponding results for the
exponential martingale.

In the case of $Y=\Phi(X)$ where $X $ is a Vasicek process with
instantaneous variance $\eta^2$, zero long-term mean and negative
mean reversion speed $\eta^2/2$, the variance of the random
variable $Y_t$ is given by $\E\left[Y_{t}^2\right]-Y_0^2$ where
\beqn
\E\left[Y_{t}^2\right]&=&\int_{0}^1\frac{y^2}{f(F^{-1}(y))}f_{X_t}(F^{-1}(y))dy \nonumber\\
&=&\Phi_2\left(X_0,X_0;\frac{\e^{\eta^2t}-1}{\e^{\eta^2t}}\right)\nonumber\\
&=&\Phi_2\left(X_0,X_0;1-\e^{-\eta^2t}\right)\nonumber 
\eeqn
where $\Phi_2(x,y;\rho)$ is the standard bivariate Normal
cumulative distribution with correlation $\rho$. In particular,
$\lim_{t\to\infty}
\E\left[Y_{t}^2\right]-Y_0^2=\Phi(X_0)-Y_0^2=Y_0(1-Y_0)$.

As per properties of martingales
$\E\left[Y_{s}Y_{t}\right]=\E\left[Y^2_{s\wedge t}\right]$ so that for any $\delta\geq 0$, the
auto-covariance of $\{Y_t,Y_{t+\delta}\}$ is equal to the variance
of $Y_t$. The variance of the increments is then given by
\beqn
\var\left[Y_{t+\delta}-Y_t\right]&=&\var\left[Y_{t+\delta}\right]-\var\left[Y_{t}\right] \nonumber\\
&=&\E\left[Y_{t+\delta}^2\right]-\E\left[Y_{t}^2\right]\nonumber\\
&=&\Phi_2\left(X_0,X_0;1-\e^{-\eta^2(t+\delta)}\right)  -\Phi_2\left(X_0,X_0;1-\e^{-\eta^2t}\right)\nonumber
\eeqn

which converges to zero as $t\to\infty$. Intuitively, this means
that the ``activity'' of the process (path by path) will decrease
with time, and the process will converge to some constant level.
By comparison, the variance of the exponential martingale $M_t=M_0\e^{-\eta^2/2t+\eta W_t}$ increases
with $t$:
\beqn
\var\left[M_{t}-M_{s}\right]&=&\E\left[\left(M_{t}-M_{s}\right)^2\right]=\E\left[M_{s}^2\left(\frac{M_{t}}{M_{s}}-1\right)^2\right]\\
&=&\E\left[M_{s}^2\right]\left(\E\left[\frac{M_{t}^2}{M_{s}^2}\right]-2\E\left[\frac{M_{t}}{M_{s}}\right]+1\right)\nonumber\\
&=&X_0\e^{\eta^2s}\E\left[\e^{-(2\eta)^2/2s+2\eta\sqrt{s}Z}\right]\times\left(\e^{\eta^2(t-s)}\E\left[\e^{-(2\eta)^2/2(t-s)+2\eta\sqrt{t-s}Z}\right]\right.\\
&&~~~~~~~~~~~\left.-2\E\left[\e^{-\eta^2/2(t-s)+\eta\sqrt{t-s}Z}\right]+1\right)\nonumber\\
&=&X_0\e^{\eta^2s}\left(\e^{\eta^2(t-s)}-1\right)\nonumber \eeqn
Because the paths of the $\Phi$-martingale $Y$ evolve between two bounds, a central question is to determine whether they collapse to the bounds, in which case the distribution of $Y_t$ would have less and less mass in $(0,1)$ in the sense that for any arbitrarily small threshold $\epsilon>0$ and any probability level $0<p<1$, there exists a time $t_{\epsilon}$ such that for all $t>t_\epsilon$, $\Q\{Y_t\in[0,\epsilon)\cup(1-\epsilon,1]\}>p$; $Y_t$ ends up in the neighborhood of the bounds with any desired confidence interval. This will be proven in the case when $F=\Phi$ and $\eta(t)=\eta$ (Section~\ref{sub:asymptotics}). An intuitive development is provided in Appendix~(\ref{app:collapse}) in the more general case of bounded martingales.

This might be an argument to show that this specific setup is not appropriate in many cases. However, this distribution behavior is shared by the quite popular geometric Brownian motion for example. The distribution of the exponential martingale is collapsing to 0 as $t\to\infty$ (see Fig.~\ref{fig:CLNQ} for an illustration of the quantiles for the corresponding stochastic processes $M_t$, $q(t,p):= q:\Q\{M_t\leq q\}=p$).
The fact that for this process, the variance of $M_{t+\delta}-M_t$ is not converging to zero as time passes in spite of this collapsing feature results from the fact that the right tail of the exponential martingale distribution is unbounded.

\subsubsection{Asymptotic distribution of the exponential martingale}
%
Consider the exponential martingale $M$ introduced above. The corresponding quantile function $q(t,p)$ defined according to $\Q\{[M_t\leq q(t,p)\}=p$ is
\beq q(t,p)=\exp\left(\eta\sqrt{t}\Phi^{-1}(p)-\eta^2/2t\right)
\nonumber \eeq
and for $0<p<1$,
\beq \lim_{t\to\infty} q(t,p) = \lim_{t\to\infty}
\exp\left(\sqrt{t}(a-b\sqrt{t})\right)\nonumber \eeq
for some finite $-\infty<a:=\eta\Phi^{-1}(p)<\infty$ and $0<b:=\eta^2/2<\infty$. For $t\geq t^\star:=\max(0,a/b)$, the expression in the RHS limit is strictly decreasing to 0 with respect to $t$. The $p=50\%$ case (median) is precisely the largest $p$ such that the curve is decreasing everywhere.\footnote{This contradicts the naive interpretation of martingales having ``no tendency to raise or fall''; the exponential martingale  \textit{does} have a tendency to fall since $\Q\{M_{t+\delta}<M_t\}>50\%$ but its expectation does not $\E[M_{t+\delta}]=\E[M_t]$. This reflects the fact that the martingale $M$ satisfies $\lim_{t\rightarrow \infty } M_t=0$} 
\begin{figure}
\begin{center}
\subfigure[Exponential martingale with $M_0=1$]{\includegraphics[width=0.45\columnwidth]{./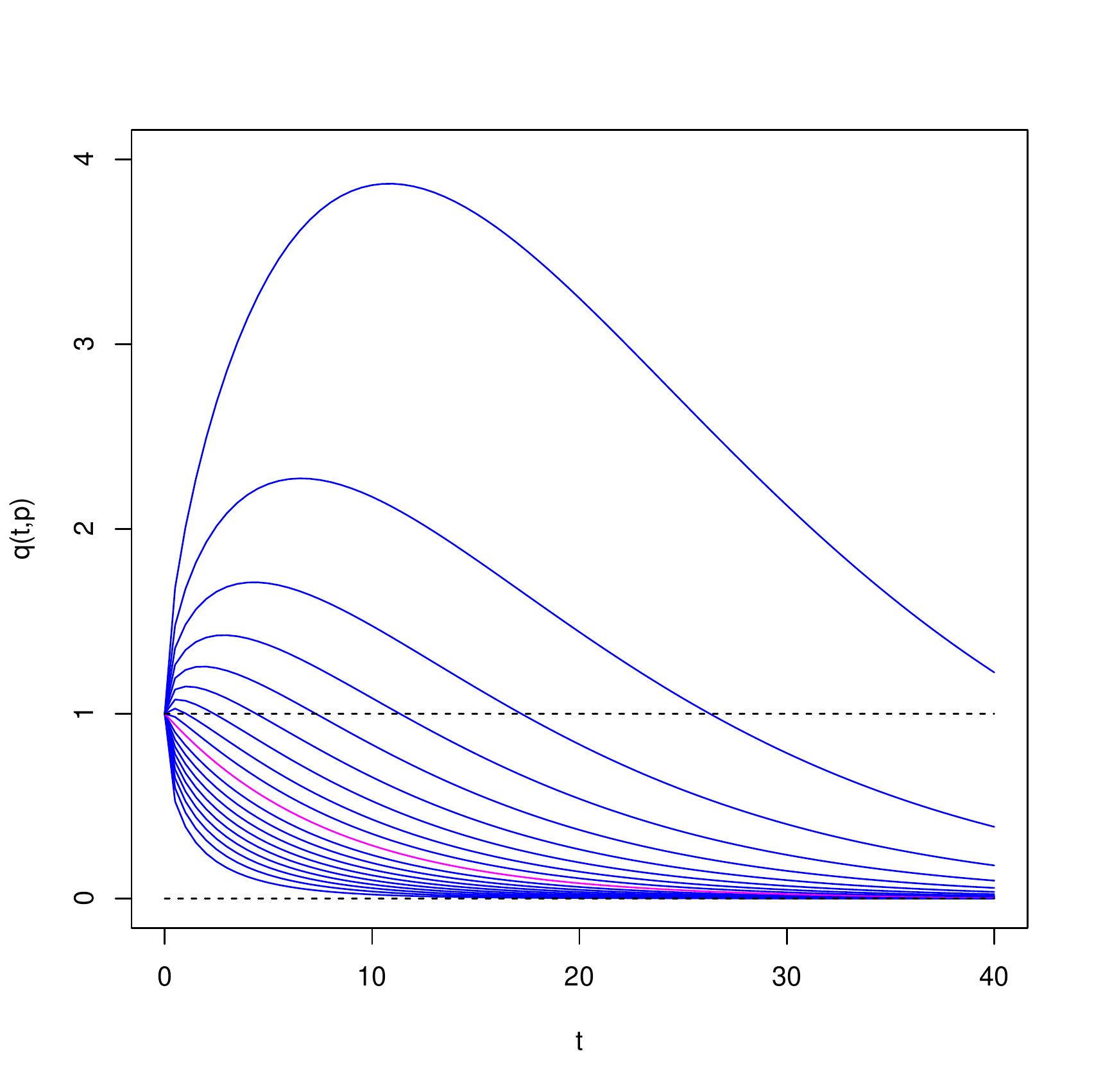}}\hspace{0.5cm}
\subfigure[$\Phi$-martingale with $Y_0=0.5$]{\includegraphics[width=0.45\columnwidth]{./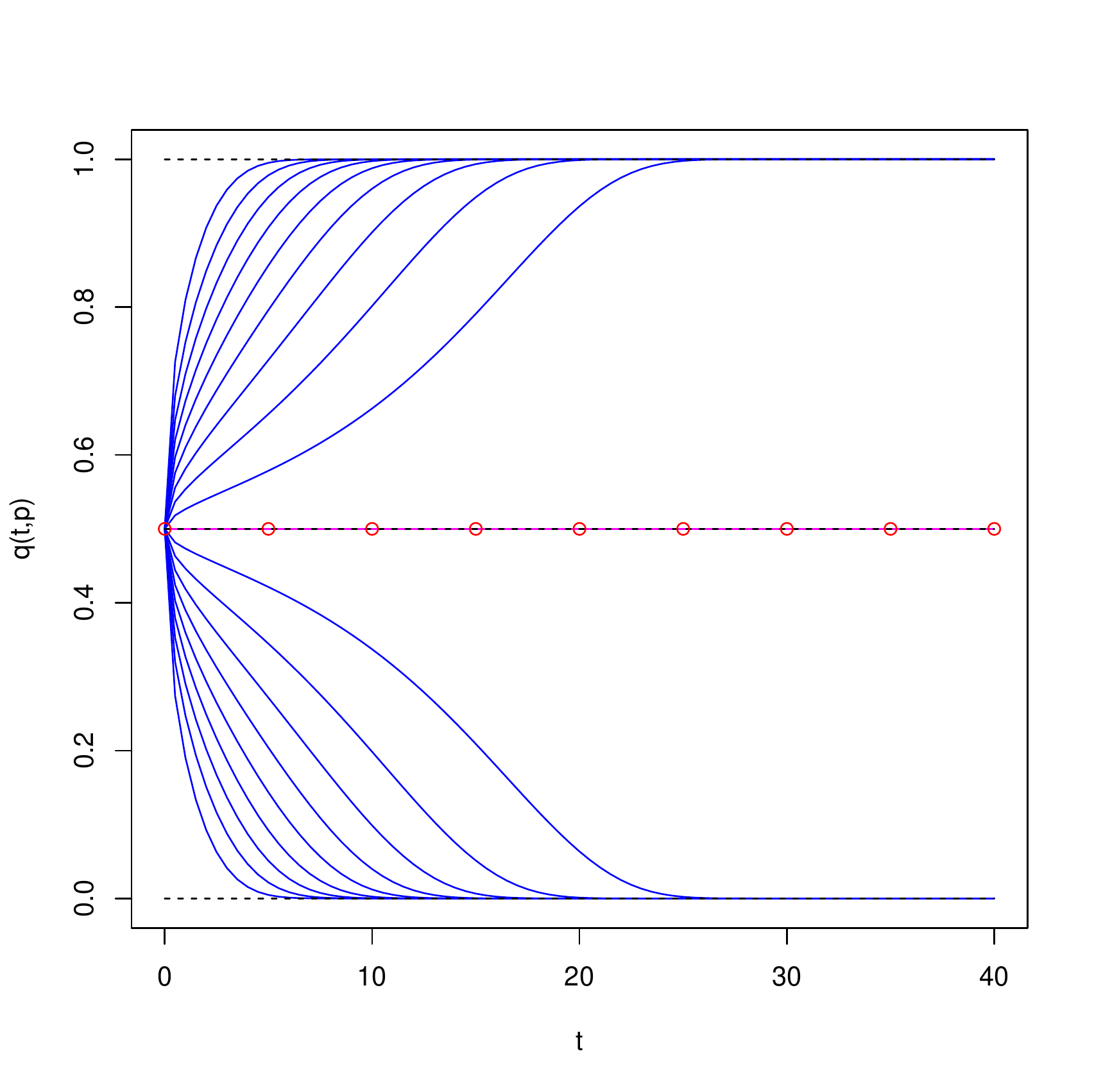}}\\
\subfigure[$\Phi$-martingale with $Y_0=0.4$]{\includegraphics[width=0.45\columnwidth]{./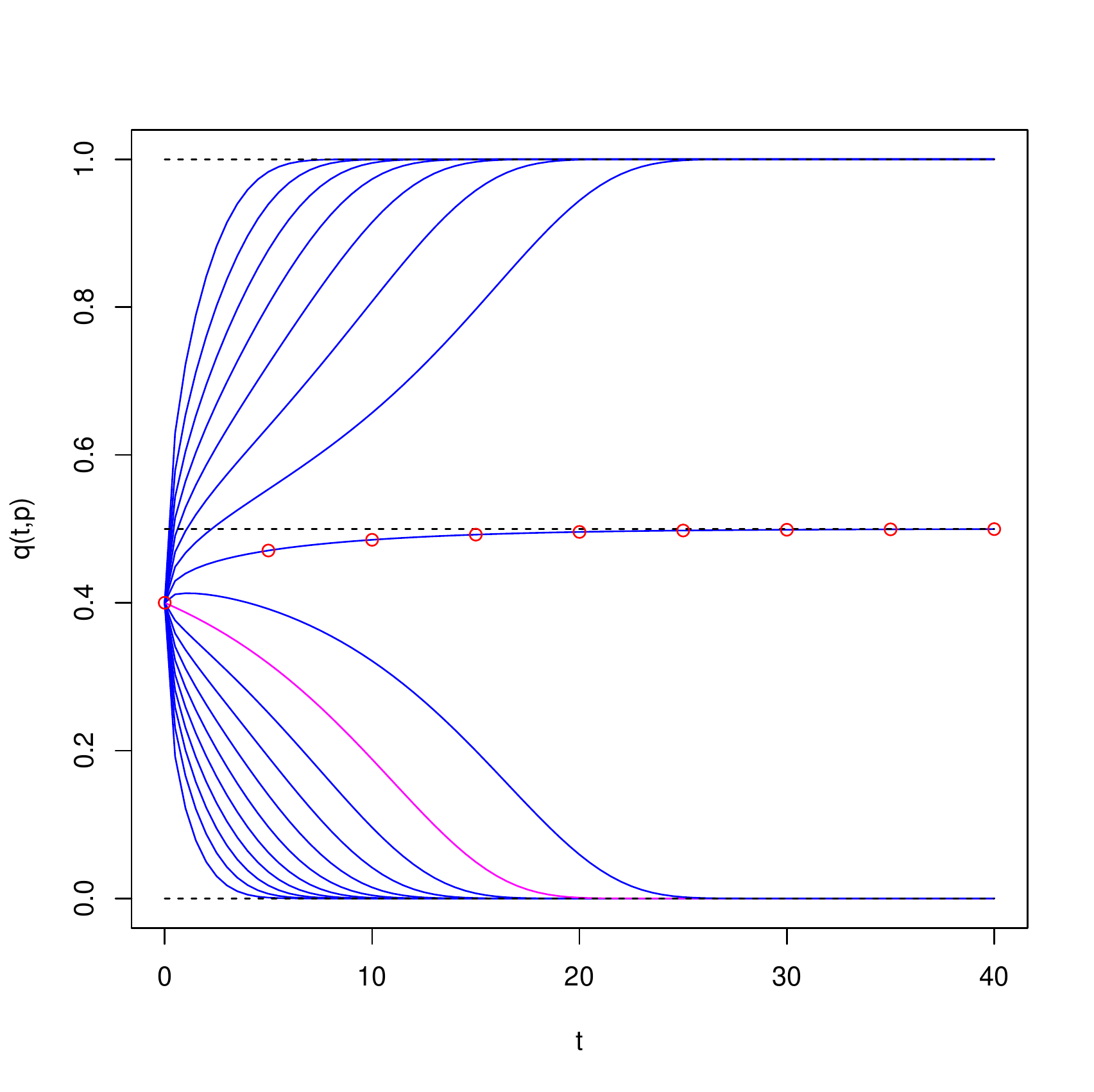}}\hspace{0.5cm}
\subfigure[$\Phi$-martingale with $Y_0=0.6$]{\includegraphics[width=0.45\columnwidth]{./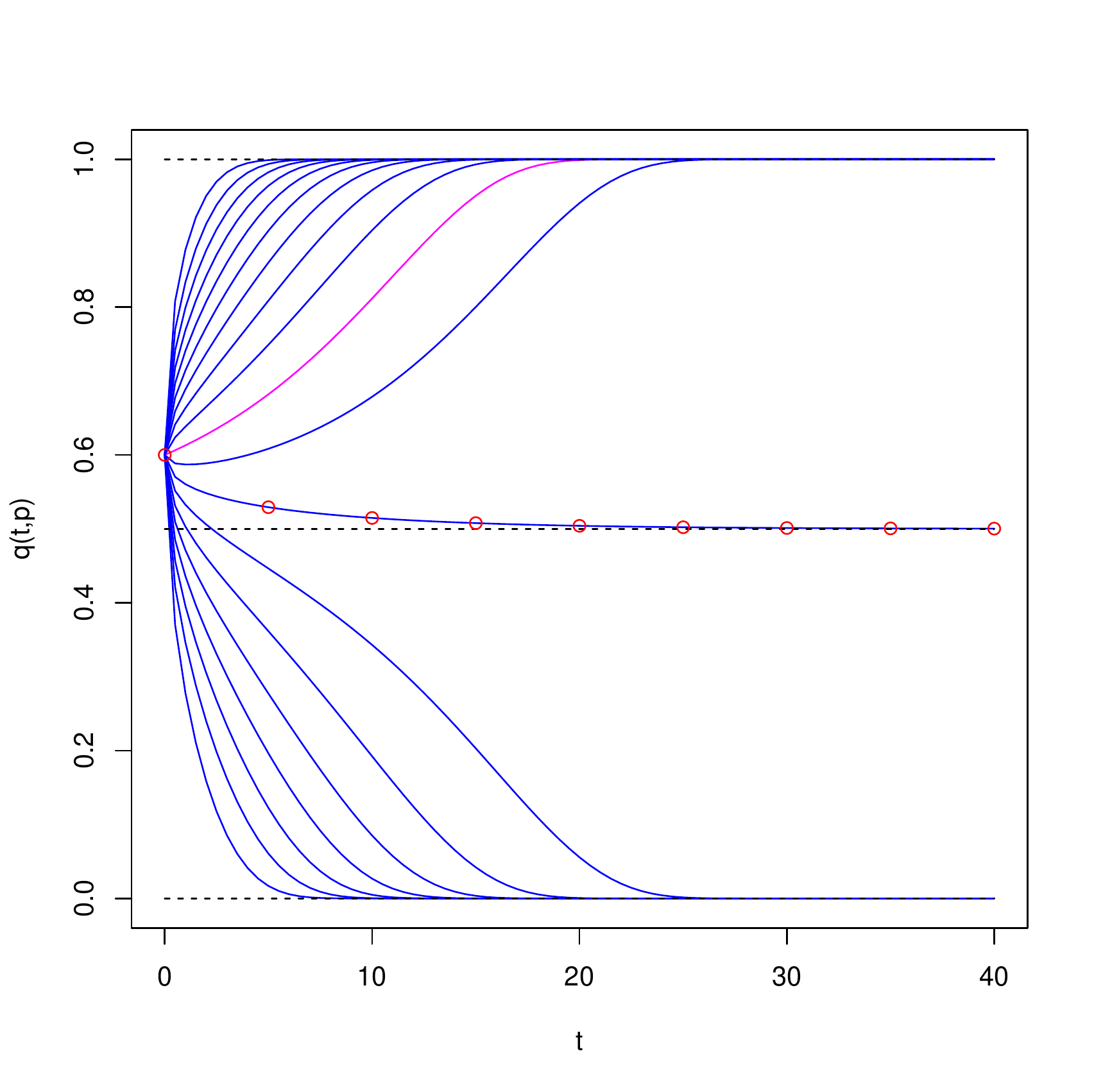}}
\caption{Quantile trajectories $q(t,p)$ of the distributions of the exponential martingale $M_t$ and the $\Phi$-martingale $Y_t$ for $\eta=50\%$. The curves are shown for $p\in\{5\%,10\%,\ldots,95\%\}$ probability levels (of course, the corresponding trajectories are ordered bottom up). For the exponential martingale case, the curve associated to the median is the largest decreasing curve. For the bounded martingale case, the distribution collapses to a $Bernoulli(Y_0)$. The median is shown in magenta, and the $(1-Y_0)$-quantiles are emphasized with dots. If $Y_0=0.5$, the distribution is equally splitted to the bounds. }
\label{fig:CLNQ}
\end{center}
\end{figure}
%
\subsubsection{Asymptotic distribution of the $\Phi$-martingale $Y $ ($\eta(t)=\eta$)}
\label{sub:asymptotics}
%
We get
\beqn
\lim_{t\to\infty} \Q\{Y_t\leq y\}&=&\lim_{t\to\infty} \Phi\left(\Phi^{-1}(y),\Phi^{-1}(Y_0)\e^{\eta^2t/2},\sqrt{\e^{\eta^2t}-1}\right)\nonumber\\
&=&\lim_{t\to\infty} \Phi\left(\frac {\Phi^{-1}(y)-\Phi^{-1}(Y_0)\e^{\eta^2t/2}}{\sqrt{\e^{\eta^2t}-1}}\right)\nonumber\\
&=&0\ind_{\{y=0\}}+\Phi\left(-\Phi^{-1}(Y_0)\right)\ind_{\{0<y<1\}}+\ind_{\{y=1\}}\nonumber\\
&=& (1-Y_0) \ind_{\{0<y<1\}}+\ind_{\{y=1\}}\nonumber
\eeqn

Therefore, $Y_t$ converges in distribution to $Bernoulli\left(Y_0\right)$ as $t\to\infty$. It is worth noting that $Bernoulli\left(Y_0\right)$ corresponds to the distribution in $[0,1]$ with maximum variance for a given mean $Y_0\in[0,1]$ (this is quite intuitive and easy to prove).

Because the Gaussian solution $X$ does not explode, the collapsing feature of the $\Phi$-martingale is an asymptotic behavior: $\Q\{Y_t\in\{0,1\}\}=0$ for all $t>0$. This is in contrast with the case~(\ref{eq:ExpSDE}) with $\mu=\lambda\eta^2/2>0$ where the process $Y=\exp(-\lambda X)$ has a positive probability to be strictly zero before any finite time: $\forall t>0,~\Q\{Y_t=0\}>0$.

\subsection{Autonomous Gaussian martingales}

We are investigating in which case a continuous local martingale $Y $ which is a diffusion  with separable diffusion coefficient can be written as the time-homogeneous (that is, autonomous) mapping $F$ of a  Gaussian diffusion $X $.
\begin{definition}[Gaussian Diffusion] A \emph{Gaussian diffusion} is the unique solution $X$ to the SDE~(\ref{eq:LatProc00}) where the drift $\mu(t,x)$ is affine in $x$, $\mu(t,x)=a(t)+b(t)x$ and the diffusion coefficient $\eta(t,x)$ is a function of time only, $\eta(t,x)=\gamma(t)<\infty$ for all $t$.
\end{definition}
Observe that not all Gaussian processes are Gaussian diffusions in the sense of the above definition. For instance, the solution $X$ to the SDE $dX_t=\sign(W_t)dW_t$ is not a Gaussian diffusion but is a Brownian motion (and thus a Gaussian process),   and Fractional Brownian motions are Gaussian processes which are even not  semi-martingales.
\begin{definition}[Autonomous Gaussian Martingales] We say that the martingale $Y$ is \emph{autonomous Gaussian} if (i) it can be obtained by mapping a Gaussian diffusion $X$ through an autonomous mapping $F(x)$ and (ii) the diffusion coefficient is separable in the sense of (\ref{eq:SDEySep}).
\end{definition}
Equating the $(dt)$ and $(dW_t)$ terms of the $Y$ SDE~(\ref{eq1}) with that of the $F(t,X_t)$ SDE obtained using It\^o, we get
\beqn
(a(t)+b(t)x)F_x(t,x) + \frac{\eta^2(t)}{2}F_{xx}(t,x)&\stackrel{(dt)}{=}&-F_t(t,x) \nonumber\\
\gamma(t)F_x(t,x) &\stackrel{(dW_t)}{=}& \eta(t) h\circ F(t,x) \nonumber
\eeqn
Using a time-homogeneous mapping yields $F(t,x)=F(x)$, implying that $\eta(t)=\gamma(t)$. The $(dW_t)$ equation then corresponds to eq.~(\ref{eq:ODE1}) and it solution yields the space component $h(y)$ of the separable diffusion coefficient $\sigma(t,y)=\gamma(t)h(y)$. In this exercise however, we are interested in the form of $F(x)$ that can be used so that $X $ is a Gaussian process and $Y =F(X )$ a martingale. Setting $G(x)=F_x(x)$, the $(dt)$ equation becomes
\beq
\frac{\eta^2(t)}{2}G_x(x) + (a(t)+b(t)x)G(x) =0 \nonumber
\eeq
This is a first-order ODE which solution is given by
\beqn
G(t,x) &=&k_1(t)\e^{-\frac{2a(t)x+b(t)x^2+c(t)}{\eta^2(t)}}\nonumber\\
F(t,x) &=&k_1(t)\int_{-\infty}^x\e^{-\frac{2a(t)u+b(t)u^2+c(t)}{\eta^2(t)}}du+k_2(t)\label{eq:GenSolSDE}
\eeqn
As we considered time-independent mapping ($F(t,x)=F(x)$ for all $x$), the ratios $a(t)/\eta^2(t)$, $b(t)/\eta^2(t)$, $c(t)/\eta^2(t)$ and the integration constants $k_1(t),k_2(t)$ need all to be constant in order to get the required form for the $Y_t=F(X_t)$ SDE. We note them $a,b,c,k_1,k_2$. Clearly, the case $b<0$ can be excluded as the integral in eq.~(\ref{eq:GenSolSDE}) does not converge in this case. We thus have three main cases for $(a,b,c)$ to analyze:

\begin{itemize}
\item  $(0,0,c)$: $F(x)=k_1\e^{-c}x+k_2$; the mapping is an affine function of the form $F(x)=\alpha x +\beta$.
\item  $(a< 0,0,c)$: $F(x)=\frac{-k_1\e^{-c}}{2a}\e^{-2ax}+k_2$, the mapping is a shifted exponential $F(x)=\frac{-k_1\e^{-c}}{2a}\e^{-2ax}+k_2=\frac{\alpha}{\xi}\e^{\xi x}+k$ with $\xi>0$.
\item  $(a,b>0,c)$: $F(x)=\alpha\Phi(\frac{x-\beta}{\xi})+k_2$ where $\alpha=k_1\sqrt{\pi/b}\e^{a^2/b-c}>0$, $\beta=-a/b$ and $\xi=1/\sqrt{2b}$. The mapping is a shifted and rescaled version of the Normal cumulative distribution function.
\end{itemize}

The above mappings are the only ones leading to null $dt$ term and separable diffusion coefficient for $Y_t=F(X_t)$ when $X_t$ is a Gaussian diffusion. This also specify the form of the space component $h(x)$ of the diffusion coefficient that can be obtained by mapping Gaussian processes through $F(x)$. From the $(dW_t)$ equation, we get respectively

\begin{itemize}
\item Since $a=b=0$, $X$ is a rescaled Brownian motion ($d\langle X,X\rangle_t=\gamma^2(t)dt$) and $Y_t=\alpha X_t+k$ is a shifted and rescaled copy. In particular, $h(x)=h=\alpha$
\item $Y$ is the exponential of a Gaussian process with shift: $F_x(x)=\alpha\e^{\xi x}$ so that $h(x)=F_x(F^{-1}(x))=\xi(x-k)$.
Since $\xi>0$, a continuity argument shows that the process $Y$ is bounded below $k$ if $Y_0>k$.
\item $Y$ is obtained by mapping the Gaussian process $X$ through a Normal cumulative distribution, rescaling and shifting. The obtained process is a $[k,\alpha+k]$-martingale. In this case, $h(x)=\frac{\alpha}{\beta}\phi\left(\Phi^{-1}\left(\frac{x}{\alpha}\right)\right)$.
\end{itemize}

We summarize these results in the theorem below.
\begin{theorem}[Autonomous Gaussian Martingales] The only autonomous Gaussian martingales are (up to a deterministic shift and scaling coefficient) i) the trivial martingale, ii) the Brownian motion, iii) the geometric Brownian motion and iv) the $\Phi$-martingale. Interestingly, each resulting process has a specific range, namely: constant, unbounded, one-side bounded and two-sides bounded.
\end{theorem}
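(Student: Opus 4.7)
The plan is to reverse-engineer the mapping $F$ by imposing (i) that $X$ is a Gaussian diffusion, (ii) that $Y=F(X)$ is a driftless SDE, and (iii) that the resulting diffusion coefficient of $Y$ is separable in the sense of eq.~(\ref{eq:SDEySep}). Concretely, I would start from the autonomous ansatz $Y_t=F(X_t)$ with $X$ solving $dX_t=(a(t)+b(t)X_t)dt+\gamma(t)dW_t$, apply It\^o's lemma, and equate the drift and diffusion parts with those of the target SDE $dY_t=\sigma(t,Y_t)dW_t=g(t)h(Y_t)dW_t$. The diffusion identification $\gamma(t)F'(x)=g(t)h(F(x))$ together with time-independence of $F$ forces $\gamma(t)=g(t)$ (up to constant) and recovers the ODE~(\ref{eq:ODE1}) governing $h$.

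The key step is the drift identification. Setting $G(x):=F'(x)$, the vanishing-drift condition reduces to the first-order linear ODE $\frac{\gamma^2(t)}{2}G'(x)+(a(t)+b(t)x)G(x)=0$. Because $F$ is autonomous while the coefficients carry the time variable, separation of variables forces the ratios $a(t)/\gamma^2(t)$ and $b(t)/\gamma^2(t)$ (and the integration constant) to be \emph{constants} $a,b,c$; otherwise the ODE cannot have a time-independent solution. Integrating yields the canonical form displayed in eq.~(\ref{eq:GenSolSDE}), namely $F(x)=k_1\int^x\exp\!\bigl(-\tfrac{2au+bu^2+c}{1}\bigr)du+k_2$.

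Next I would enumerate admissible $(a,b)$. The case $b<0$ must be discarded because the integral defining $F$ diverges, so $F$ fails to be a well-defined bijection onto a bounded or real-valued range suitable for a martingale transform. This leaves three sub-cases: $(a=b=0)$ gives an affine $F$, so $Y$ is a shifted/scaled Brownian motion; $(a\neq 0, b=0)$ gives a shifted exponential $F(x)=\tfrac{\alpha}{\xi}e^{\xi x}+k$, so $Y$ is a shifted geometric Brownian motion (and $h(x)=\xi(x-k)$ is linear); $(b>0)$ completes the square under the integral and identifies $F$ with a shifted and rescaled Gaussian CDF $\alpha\Phi(\tfrac{x-\beta}{\xi})+k_2$, so $Y$ is (after affine normalization) the $\Phi$-martingale. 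The trivial martingale corresponds to the degenerate sub-case $\alpha=0$ (or $g\equiv 0$).

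The main obstacle is the separation argument: justifying rigorously that time-independence of $F$ forces the three coefficients in the exponent to separate as $a(t)=a\gamma^2(t)$, $b(t)=b\gamma^2(t)$, $c(t)=c\gamma^2(t)$ with $a,b,c$ scalars. This follows by differentiating the identity $G'(x)/G(x)=-2(a(t)+b(t)x)/\gamma^2(t)$ in $t$ at two distinct values of $x$ (to decouple $a(t)$ and $b(t)$), and observing that the left-hand side is time-independent. Once this is established, the case analysis is straightforward and the range statements (constant, $\mathbb{R}$, a half-line, a bounded interval) follow directly from the shape of each $F$.
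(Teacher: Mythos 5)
Your proposal is correct and follows essentially the same route as the paper: It\^o matching of the $(dt)$ and $(dW_t)$ terms, reduction of the vanishing-drift condition to the linear ODE $\frac{\gamma^2(t)}{2}G'(x)+(a(t)+b(t)x)G(x)=0$ for $G=F'$, the observation that time-independence of $F$ forces $a(t)/\gamma^2(t)$ and $b(t)/\gamma^2(t)$ to be constants, exclusion of $b<0$ by divergence of the integral, and the same three-way case analysis yielding the affine, exponential and Gaussian-CDF mappings. Your explicit justification of the separation step (differentiating the identity in $t$ at two distinct values of $x$) is a small addition in rigor over the paper, which merely asserts it.
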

This result says that if one wishes to construct a continuous local martingale $Y$ with separable diffusion coefficient $\sigma(t,y)=\eta(t)h(y)$ and evolving in a given set by mapping a Gaussian diffusion via an invertible autonomous function, there are not many alternatives: only one family of mapping per type of range. In particular, the $\Phi$-martingale $\Phi(X)$ is the only bounded continuous martingale with separable diffusion coefficient that can be obtained by mapping a Gaussian diffusion $X$ through a smooth autonomous function $F(x)$. 

Note that if one relaxes the time-homogeneity and invertibility
constraints, other solutions are possible. For instance, in the
case $a(t)=b(t)=0$ and $\eta(t)=\eta$ ($X_t=X_0+\eta W_t$) and
setting $F(t,x)=x^2-\eta^2 t$, we obtain $Y_t=X^2_t-[X]_t$ which is a well known martingale (in $\mathbb{R}$).

In the next section, we show how these martingales can be used in credit risk modeling applications.

\section{Application to Survival Probabilities}
\label{sec:SP}

We adopt the credit risk modeling setup and focus on the default
time $\tau$ of some reference entity. In this framework, one
usually defines the filtration $\mathbb{F}$, which represents the
market information excluding default observation. The enlarged
filtration is obtained by including the explicit information
relevant to the default event: $\mathcal{G}_t=\filF_t\vee
\sigma(\ind_{\{\tau>s\}},0\leq s\leq t)$ (with right-continuous
regularisation) . In Cox models for example, the stochastic
intensity process $\lambda $ is $\mathbb{F}$-adapted, but
conditional upon the path $(\lambda_t)_{t\geq 0}$, the occurrence
of default $\ind_{\{\tau\leq t\}}$ is an independent event. More
generally, the latter is $\mathcal{G}_t$-measurable, but not
$\filF_t$-measurable. Literature on credit risk modeling emphasize
that under some conditions, one can get rid of actual default
modeling; default indicators can be replaced by stochastic default
probabilities, working in the restricted filtration $\mathbb{F}$
instead of the complete filtration $\mathbb{G}$. We do not enter
the details of this modeling approach, but refer the reader
to~\cite{Lando04} and~\cite{Biel11} for more information.

\subsection{Unconditional survival probability and Az\'ema supermartingale}
%
We now move to the modeling of the martingale $S_{t}(T), t\geq 0$
defined in eq.~(\ref{eq:DefStT}). This is useful in many
circumstances, including the pricing of credit derivatives or to
adjust the price of a derivatives portfolio to account for
counterparty risk (credit value adjustment), see
e.g~\cite{Ces09},~\cite{Brigo05}. As an
illustration of the above methodology, we set
$S_{t}(T):=\Phi(X_{t}(T))$ where $X_{t}(T)$ satisfies
\beq
dX_t(T)=(\eta^2/2) X_ {t}(T)dt+\eta dW_t
\eeq

Clearly, $S_{t}(T), t\geq 0$ is a martingale with initial value $S_0(T)$.

The initial survival probability function $S_0(t)$ is assumed to
be provided (in credit derivative applications, it is obtained by
bootstrapping market quotes of financial instruments, like
defaultable bonds or credit default swaps). It is decreasing and
satisfies for all $t>0,$ $0<S_0(t)<1$, which means the the
associated hazard rate is strictly positive and finite. This leads
to
\beqn
S_{t}(T)&=&\Phi\left(m(t,T)+\eta Z_t\right)\\
Z_{t}&:=&\int_{0}^t e^{(\eta^2/2) (t-s)}dW_s\label{eq:Zt}\\
m(t,T)&:=&X_{0}(T)\e^{(\eta^2/2) t} \nonumber\eeqn
The Az\'ema supermartingale $S_{t}:=S_{t}(t), t\geq 0$ is often
modeled either by using a simple Gaussian process ~\cite{Ces09} or
by adopting the Cox process \cite{Brigo05}. However, the first
approach clearly violates the $[0,1]$ condition, and the second
corresponds to the specific case where $S $ is a decreasing
predictable process. However, the general Doob-Meyer decomposition
(in its additive form) reveals that in all generality, we
have~\cite{Biel11},~\cite{Prof06}
\beq dS_t=dD_t+dM_t \nonumber\eeq
where $D $ is a decreasing $\mathbb F$-predictable process and $M$
a martingale. The Cox setup is just one particular case. Moreover,
it is hard to find a positive stochastic intensity model which
allows for analytical calibration to market quotes whilst
preventing negative path (in particular, square-root diffusion
processes need to be shifted for calibration purposes, so that the
resulting intensity process may not be positive anymore). This
gives room to alternative modeling setups, and the
$\Phi$-martingale is one of them~\cite{Vrins14}. The dynamics of
the associated Az\'ema's  martingale is proven to be
\beqn
dS_t&=&\zeta_tdS_0(t)+\eta\phi(\Phi^{-1}(S_t))dW_t\label{eq:StSDE}\\
\zeta_t&:=&\frac{\phi(\Phi^{-1}(S_t))\e^{\eta^2/2
t}}{\phi(\Phi^{-1}(S_0(t)))} \nonumber\eeqn
\begin{figure}
\centering
\subfigure[Density of $S_T$. Histogram (10k paths, Euler discretization) and theoretical density obtained by differentiating $F_{Y_t}$ in~(\ref{eq:CDFtransform}) with $F=\Phi$ and $X_t\sim \mathcal{N}\left(m(t, T) +\eta Z_t\right)$ (red)]{\includegraphics[width=0.45\columnwidth]{./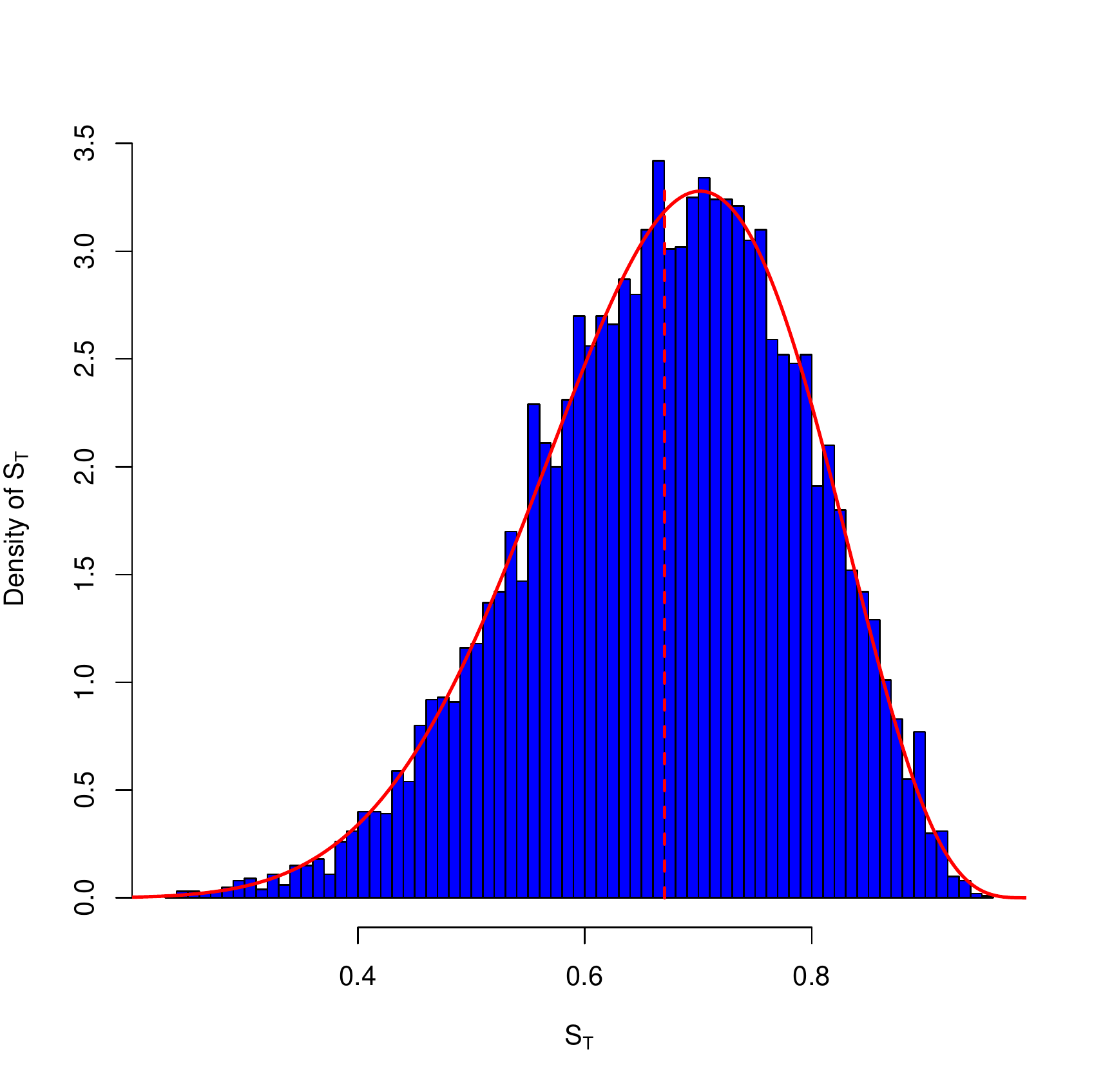}}
\hspace{0.5cm}
\subfigure[Sample paths of the Az\'ema supermartingale $S_t$ using the analytical solution $\Phi(X_t)$ (cyan) and Euler discretization of $S_t$ SDE~(\ref{eq:StSDE}) (dark blue)]{\includegraphics[width=0.45\columnwidth]{./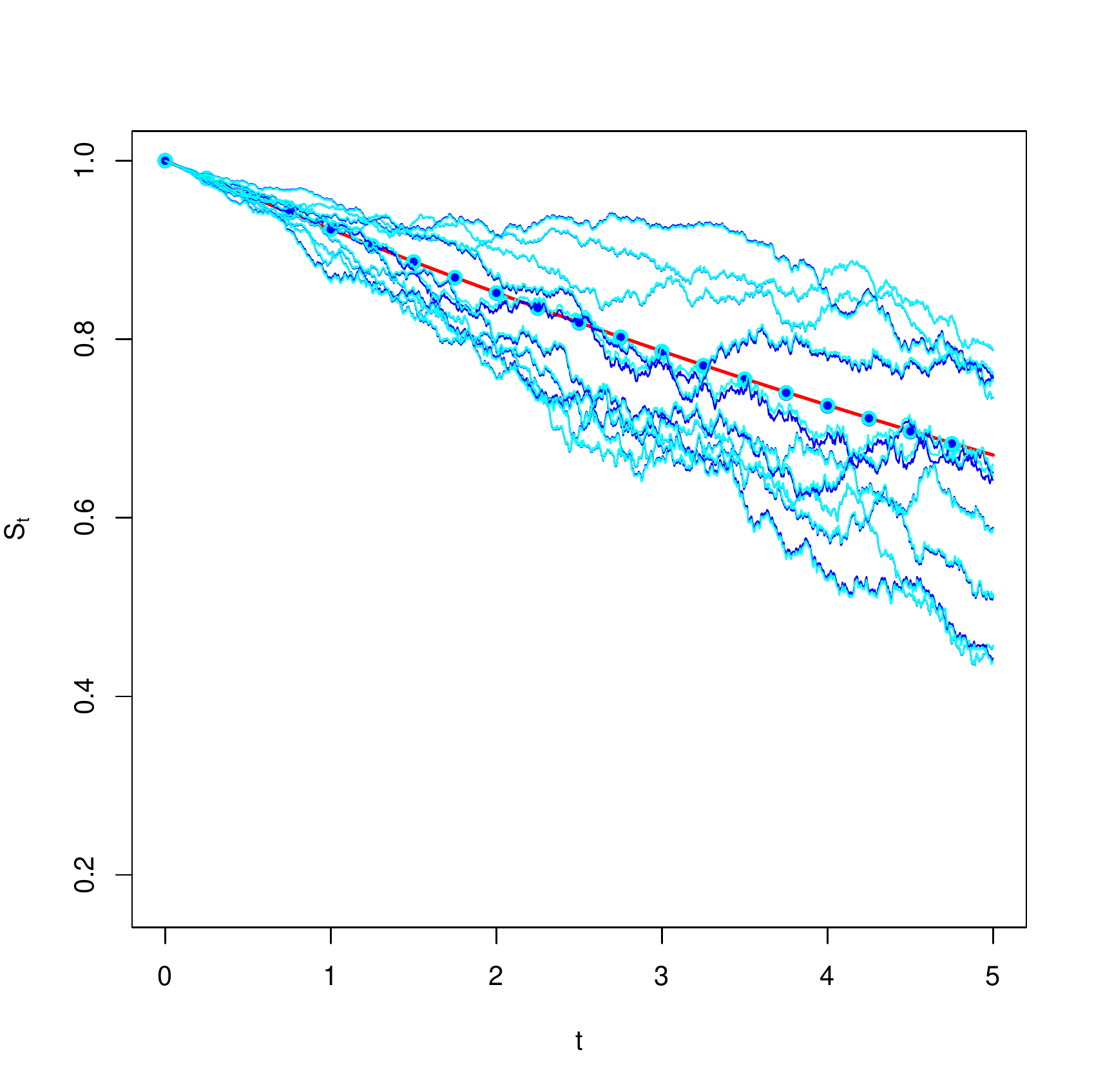}}
\caption{Distribution of $S_T$ and sample paths of $S_t$. Parameters: $\eta=0.15,~S_0(t)=\e^{-ht},~h=8\%,~T=5$.}\label{fig:St}
\end{figure}

\subsection{Unconditional survival probability and
Az\'ema supermartingale}

The \textit{survival probability up to time $T\geq t$ given no default prior to time $t$} is obtained from Bayes' rule:
\beq
Q_{t}(T):=\E\left[\ind_{\{\tau>T\}}\Big|\mathcal{F}_t,\{\tau>t\}\right]=\frac{\E\left[\ind_{\{\tau>(T\vee
t)\}}\Big|\mathcal{F}_t\right]}{\E\left[\ind_{\{\tau>t\}}\Big|\mathcal{F}_t\right]}=\frac{S_{t}(T)}{S_{t}(t)}
\eeq
which belongs to $[0,1]$ almost surely and is decreasing with respect to $T$ for all $T\geq t$.

We illustrate in Fig.~\ref{fig:QtT} the distribution of $Q_{t}(T)$ for the 16 nodes $z_i$ associated to the 16-points Gauss-Hermite quadrature associated to the standard Normal factor $Z^\star_t=\eta Z_t/\sqrt{v(t)}$ where $Z_t$ is given by eq.~(\ref{eq:Zt}) and $v(t)=\e^{\eta^2 t}-1$ is the variance of $\eta Z_t$. If $Q(t,T;z)$ stands for the value $Q_t(T)$ conditional upon $Z^\star_t=z$ and $(\omega_i,z_i)$, $i\in\{1,2,\ldots,n\}$ are the weights and nodes of the $n$-points Gauss-Hermite quadrature, then
\beqn
\Q_0\left\{\tau>T|\tau>t\right\}&=&\E[Q_{t}(T)]\nonumber\\
&=&\E[Q(t,T,Z^\star_t)]\nonumber\\
&\approx& \sum_{i=1}^n \omega_i Q(t,T;z_i)\nonumber\\
Q(t,T;z)&:=&\frac{\Phi\left(m(t,T)+\sqrt{v(t)}z\right)}{\Phi\left(m(t,t)+\sqrt{v(t)}z\right)}\stackrel{(\eta\to0)}{\to}\frac{S_0(T)}{S_0(t)}\nonumber
\eeqn
We consider the $t=0$ survival probability curve $S_0(t)=\e^{-\int_{0}^t h(s)ds}$ where the piece-wise constant hazard rate function $h(t)$ is given by the step function $\{t,\gamma(t)\}=\{(1,5\%),(3,6\%),(5,8\%),(7,8.5\%),(10,6.5\%)\}$. One can see that the implied $Q(t,T;z_i)$ curves can be quite different depending on the value of the driven factor, provided that the diffusion parameter $\eta$ of the Gaussian process underlying the $\Phi$-martingale is large enough.
\begin{figure}
\centering
\subfigure[$\eta=0.1$]{\includegraphics[width=0.45\columnwidth]{./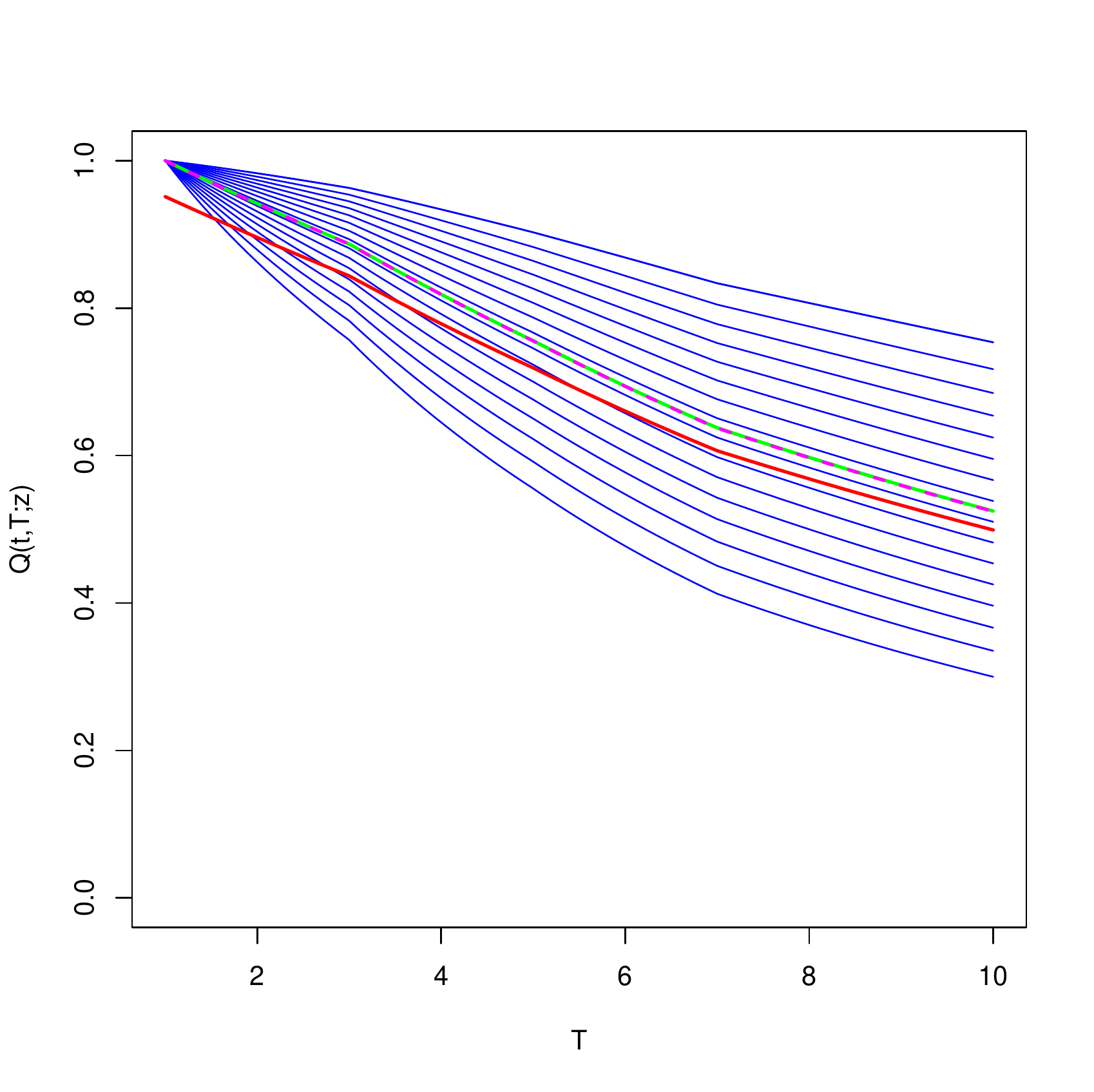}}
\hspace{0.5cm}
\subfigure[$\eta=0.25$]{\includegraphics[width=0.45\columnwidth]{./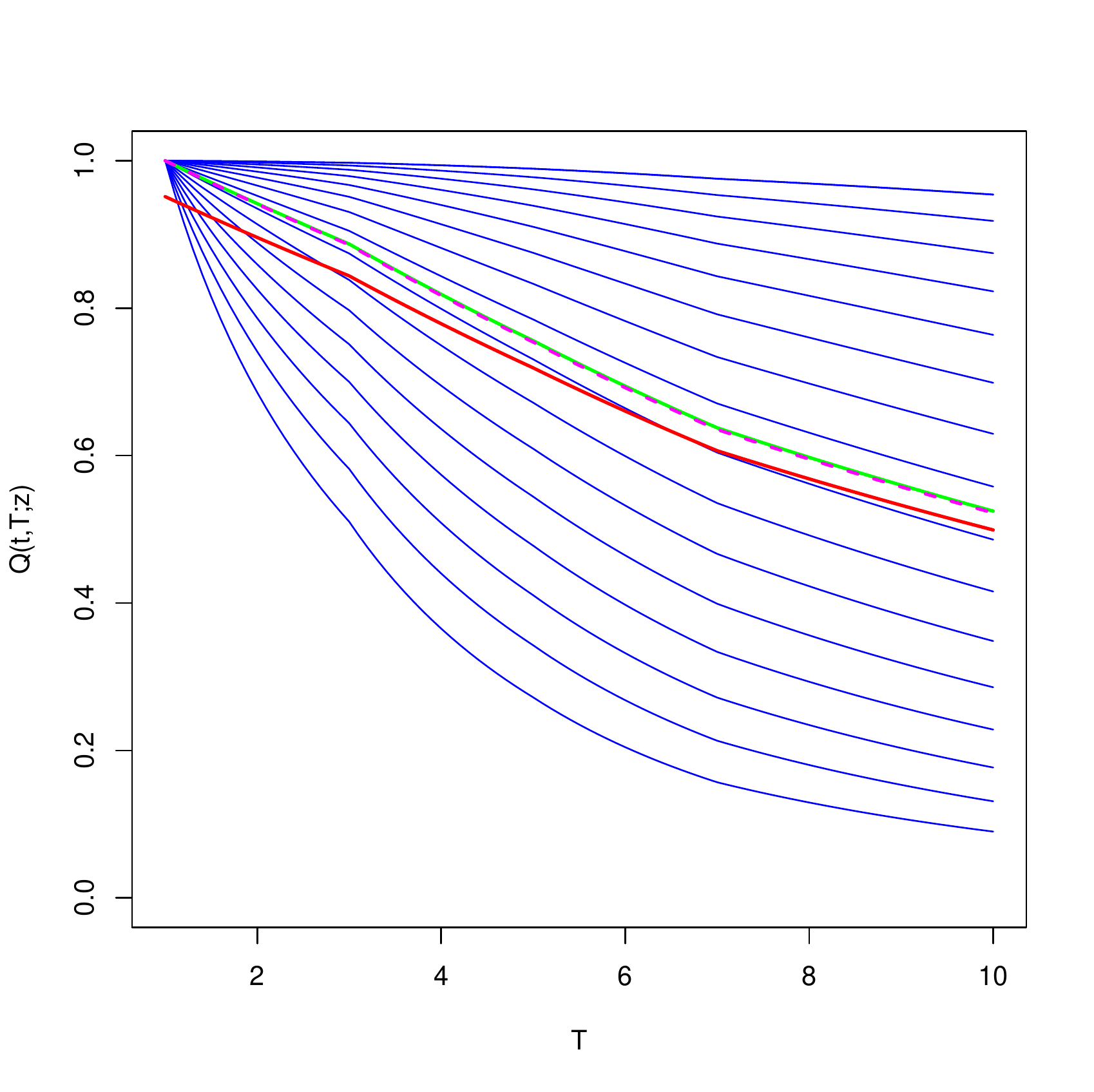}}
\caption{$S_0(T)$ (red), $S_0(T)/S_0(t)$ (green), $Q(t,T;z_i)$ (blue) and $\sum_{i=1}^{16} \omega_i Q(t,T;z_i)$ (magenta).}\label{fig:QtT}
\end{figure}
The boundary conditions for the cumulative distribution function $F_{Q_{t}(T)}(x)$ of $Q_{t}(T)$ are $F_{Q_{t}(T)}(0)=0$ and $F_{Q_{t}(T)}(1)=1$.

For $x\in(0,1)$, it is proven to be (see Appendix \ref{app:distribution})
\beqn F_{Q_{t}(T)}(x)&=&\Phi(z^\star) \nonumber\eeqn
where $z^\star=z^\star(t,T,x,\eta)$ is the (unique) root of the function
\beq G(z):=
x\Phi\left(m(t,t)+\sqrt{v(t)}z\right)-\Phi\left(m(t,T)+\sqrt{v(t)}z\right)\nonumber
\eeq
%
\subsection{Bivariate survival probability in the Gaussian copula setup}
%
Let us define the time-$t$ joint survival probability that $\tau_1>T_1$ and $\tau_2>T_2$ as
\beq
G_t(\vec{T})=\Q\{\tau_1>T_1,\tau_2>T_2|\filF_t\}\label{eq:JSP}
\eeq
We adopt a copula framework, where $G_t(T_1,T_2)$ depends on $t$
through the marginal (stochastic and correlated) distributions
$S^{1}_{t,T_1}$, $S^{2}_{t,T_2}$ and time-dependent set of
parameters $\Theta_t$ which is assumed to have finite variation\footnote{Note the difference between
$S_i(t)=S^i_{0}(t)=\Q\{\tau_i>t|\filF_0\}$ and
$S^i_t=S^i_{t}(t)=\Q\{\tau_i>t|\filF_t\}$}. In particular, the copula
if fixed, but its parameter (e.g. correlation) can be
time-dependent:   \beq
G_t(\vec{T})=C(S^1_{t}(T_1),S^2_{t}(T_2),\Theta_t) \nonumber\eeq

Observe that $G_t$ meets all the properties of multivariate cumulative distribution functions;
this is guaranteed by the fact that we map valid margins through a
copula $C(u,v,\Theta)$. However, it is clear from eq.~(\ref{eq:JSP})
that $G$ is a martingale, hence the SDE of $G$ must have no
\textit{dt} term. This imposes some restrictions on the dynamics
of the (meta) parameter $\Theta_t$.

It\^o's lemma yields
\beqn
dG_t(\vec{T})&=&\frac{\partial C}{\partial u}dS^1_{t}(T_1)+\frac{\partial C}{\partial v}dS^2_{t}(T_2)\nonumber\\
&&+\frac{1}{2}\left(\frac{\partial^2 C}{\partial u^2} d\langle S^1_{\cdot}(T_1),S^1_{\cdot}(T_1)\rangle_t+
\frac{\partial^2 C}{\partial v^2} d\langle S^2_{\cdot}(T_2),S^2_{\cdot}(T_2)\rangle_t\right)\nonumber\\
&&+\frac{\partial^2 C}{\partial u\partial v}d\langle
S^1_{\cdot}(T_1),S^2_{\cdot}(T_2)\rangle_t+\frac{\partial
C}{\partial \Theta} d\Theta_t \nonumber\eeqn
We now consider the bivariate Gaussian case, where correlated $\Phi$-martingales are plugged in a Gaussian copula:
\beq C(u,v,\Theta)=\Phi_2\left(\Phi^{-1}(u),\Phi^{-1}(v);r\right)\nonumber
\eeq
with
\beqn
S^i_{t}(T_i)&=&\Phi(X^i_{t}(T_i))\nonumber\\
dX^i_{t}(T_i) &=& \mu_i X^i_{t}(T_i)dt + \eta_i dW_t^i\nonumber\\
d\langle
W^1_{\cdot},W^2_{\cdot}\rangle_t&=&\rho dt\nonumber
\eeqn
Recall that $S^i_{t}(T_i)$ both need to be martingales, so that $\mu_i=\eta_i^2/2$. Allowing the Gaussian copula (correlation) parameter to be a deterministic function of time $r(t)$,
\beq
G_t(\vec{T})=\Phi_2\left(\Phi^{-1}(S^1_{t}(T_1)),\Phi^{-1}(S^2_{t}(T_2));r(t)\right)=\Phi_2\left(X^1_{t}(T_1),X^2_{t}(T_2);r(t)\right)\nonumber
\eeq
In this specific case, we get
\beqn
dG_t(\vec{T})&=&\frac{\partial \Phi_2}{\partial x}dX^1_{t}(T_1)+\frac{\partial \Phi_2}{\partial y}dX^2_{t}(T_2)\nonumber\\
&&+\frac{1}{2}\left(\frac{\partial^2 \Phi_2}{\partial x^2}d\langle X^1_{\cdot}(T_1),X^1_{\cdot}(T_1)\rangle_t
+\frac{\partial^2 \Phi_2}{\partial y^2}d\langle X^2_{\cdot}(T_2),X^2_{\cdot}(T_2)\rangle_t\right)\nonumber\\
&&+\frac{\partial^2 C}{\partial x\partial y}d\langle
X^1_{\cdot}(T_1),X^2_{\cdot}(T_2)\rangle_t+\frac{\partial
\Phi_2}{\partial r} dr(t) \nonumber\eeqn
The following derivatives are useful:
\beqn
\frac{d \phi(x)}{dx}&=&-x\phi(x)\nonumber\\
\frac{\partial \Phi_2}{\partial r}&=&\frac{\partial^2 \Phi_2}{\partial x\partial y}\nonumber\\
&=&\frac{1}{\sqrt{1-r^2}}\phi(x)\phi\left(\frac{y-rx}{\sqrt{1-r^2}}\right)\nonumber\\
&=&g(x,y,r)\nonumber\\
&=&g(y,x,r)\nonumber\\
\frac{\partial \Phi_2}{\partial x}&=&\phi(x)\Phi\left(\frac{y-rx}{\sqrt{1-r^2}}\right)\nonumber\\
&=&h(x,y,r)\nonumber\\
\frac{\partial^2 \Phi_2}{\partial x^2}&=&-x
h(x,y,r)-rg(x,y,r)\nonumber \eeqn
Noting that $d\langle
X^i_{\cdot}(T_i),X^j_{\cdot}(T_j)\rangle_t=\rho_{ij}\eta_i\eta_jdt$
with $\rho_{ii}=1$ and $\rho_{ij}=\rho$, and $r'(t)$ the
derivative of $r(t)$, the dynamics of $G_t(\vec{T})$ become
\beqn
dG_t(\vec{T})&=&\eta_1h\left(X^1_{t}(T_1),X^2_{t}(T_2),r(t)\right)dW^1_t+\eta_2h\left(X^2_{t}(T_2),X^1_{t}(T_1),r(t)\right)dW^2_t\nonumber\\
&&+\left(r'(t)+\rho\eta_1\eta_2-r(t)\frac{\eta_1^2+\eta_2^2}{2}\right)g\left(X^1_{t}(T_1),X^2_{t}(T_2),r(t)\right)dt\nonumber
\eeqn
Martingality is guaranteed provided that the \textit{dt} term is zero, that is
\beq
r(t)=\frac{2\rho\eta_1\eta_2}{\eta_1^2+\eta_2^2}+k\e^{\frac{\eta_1^2+\eta_2^2}{2}t}
\eeq
Because the first term is a constant but the second grows without bound, the $r(t)\in[-1,1]$ condition imposes $k=0$ if $\eta_1$ and $\eta_2$ are not both null. The only valid case is thus to set the Gaussian copula correlation parameter $r$ to the specific value $\frac{2\rho\eta_1\eta_2}{\eta_1^2+\eta_2^2}$, depending on the variance and correlation of the latent processes underlying the marginal Az\'ema supermartingales.

Finally, the bivariate Az\'ema supermartingale has dynamics
\beqn
dG_t(\vec{t})&=&dG_t(\vec{T})|_{\vec{T}=(t,t)}+\xi^1_tdS_0^1(t)+\xi^2_tdS_0^2(t)\\
\xi^1_t&=&\frac{\e^{\mu_1 t}h\Big(\Phi^{-1}(S^1_{t}),\Phi^{-1}(S^2_{t}),r(t)\Big)}{\phi\Big(\Phi^{-1}(S_0^1(t))\Big)}\\
\xi^2_t&=&\frac{\e^{\mu_2 t}h\Big(\Phi^{-1}(S^2_{t}),\Phi^{-1}(S^1_{t}),r(t)\Big)}{\phi\Big(\Phi^{-1}(S_0^2(t))\Big)}
\eeqn
where, $\xi^i_t$ are the multivariate equivalent to $\zeta_t$ and ensure that
\beq
\E[G_t(t,t)]=G_0(t,t)=\Phi_2\Big(\Phi^{-1}(S_0^1(t)),\Phi^{-1}(S_0^2(t)),r(0)\Big)
\eeq
We conclude this section with Fig.~\ref{fig:BivStT} which illustrates the joint Az\'ema supermartingale processes $G_t(t,t)=\E[\tau_1>t,\tau_2>t|\filF_t]$ and the joint survival probability martingale $G_t(T,T)=\E[(\tau_1\wedge\tau_2)>T|\filF_t]$ for different correlation levels (using the same pairs of paths of Brownian motions).
\begin{figure}
\centering
\includegraphics[width=0.98\columnwidth]{./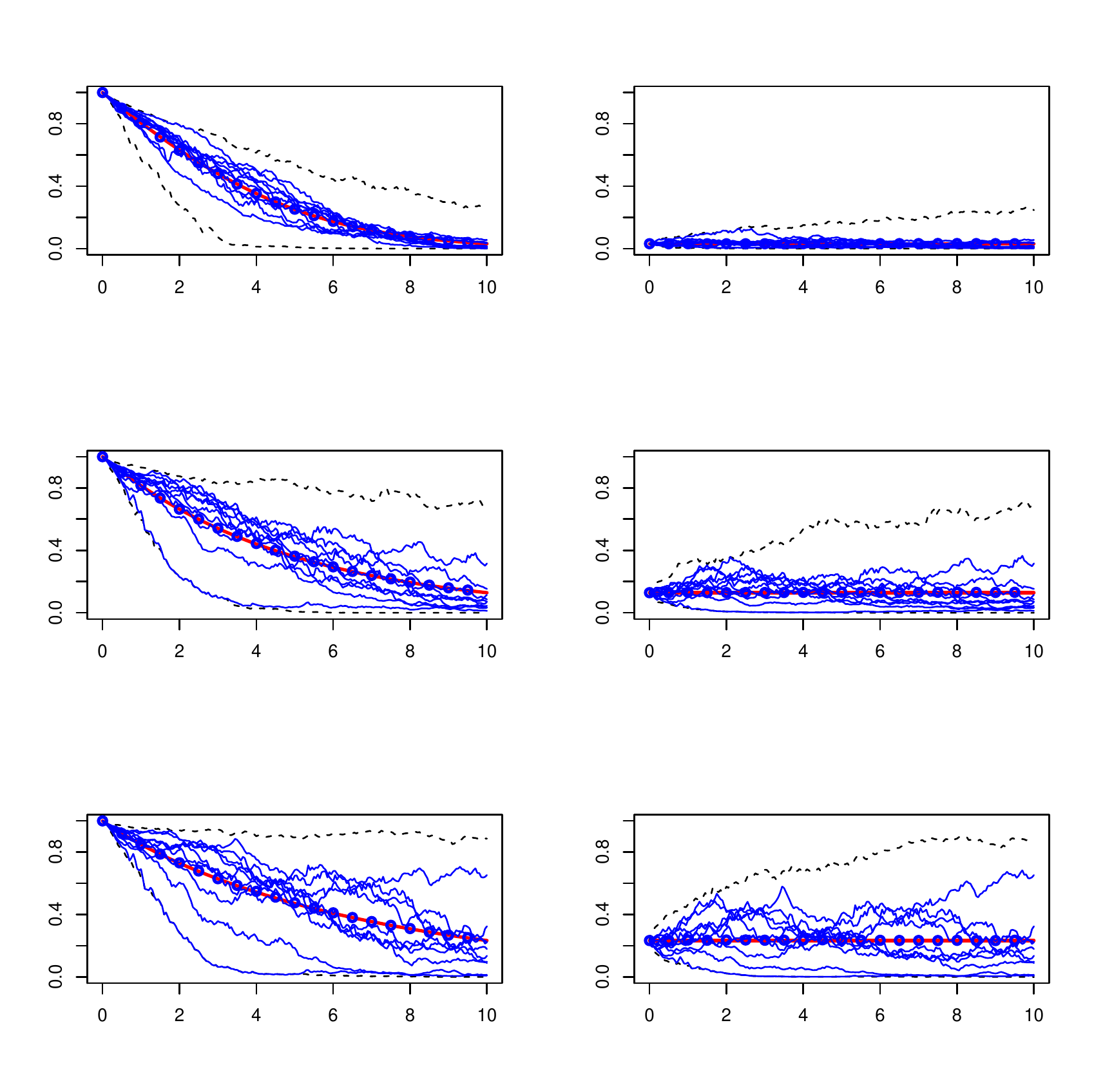}
\caption{Bivariate Az\'ema supermartingale processes $G_t(t,t)=\E[\tau_1>t,\tau_2>t|\filF_t]$ (left) and bivariate survival probability martingale $G_t(T,T)=\E[(\tau_1\wedge\tau_2)>T|\filF_t]$ (right) with Brownian correlations $\rho=\{-80\%,0,+80\%\}$ (top down). Ten sample paths are shown. There is no simulation error as the exact solution is known (up to numerical errors in the evaluation of the bivariate cumulative Normal distribution). We used constant hazard rates for $S_0^i(t)$ ($h_1=8\%$ and $h_2=12.5\%$), constant volatilities ($\eta^1=15\%$ and $\eta^2=25\%$) time step of $0.05$. Blue dots on right panels show sample average based on $1,000$ paths. Black dashed lines show the min and max envelopes based on the sample set. }\label{fig:BivStT}
\end{figure}
%
\section{Conclusion and future work}
%
In a first part of the paper, the conditions for a local martingale to be a genuine martingale have been reviewed and specialized to bounded processes. We have introduced the concept of \textit{conic processes} as stochastic processes which range is finite and non-decreasing with respect to time. We have shown that martingales being locally bounded are in fact conic martingales. It has been explained how a martingale evolving between two constant bounds with given separable diffusion coefficient $\sigma(t,x)=g(t)h(x)$ can be obtained by mapping a stochastic process $X$ which diffusion coefficient is $g(t)$ through the function $F$ solving a first-order autonomous non-linear ODE featuring $h(x)$. This results is interesting for simulation purposes as the paths of $F(X)$ will then stay within the correct range.

The case of martingales evolving in the standard interval $[0,1]$ received specific attention. Several examples have been provided for which existence and uniqueness results have been established. The mapping $F$ consisting of the standard Normal cumulative distribution $\Phi$ proves to be interesting. It allows to turn any It\^o integral into a martingale bounded in $[0,1]$. Moreover, the $\Phi$-martingale built by mapping a Vasicek process through $\Phi$ proves to be a tractable $[0,1]$-martingale that does not attain the bounds in finite time. Its statistics have been computed analytically, and its distribution is proven to converge to a Bernoulli with parameter given by the initial value of the process. It has been shown that it is the only $[0,1]$-martingale that can be obtained by mapping a specific class of Gaussian processes (called \textit{Gaussian diffusions}) in a time-homogeneous way. The $\Phi$-martingale completes the class of the possible martingales that can be obtained by such means. The other processes are the constant, the rescaled Brownian motion and the driftless Geometric Brownian motion. Interestingly, each of these martingales correspond to a specific range.

Martingales in $[0,1]$ have been applied to the construction of Az\'ema supermartingales out of the Cox setup. They obviously meet the range constraint and benefits from automatic calibration. To our knowledge this is the first analytically tractable approach satisfying this requirement for all valid initial default probability curves. Similarly, one can built a set of conditional survival probability curves evolving in time with respect to a risk factor modeled as a Brownian motion. This was extended to the modeling of mutlivariate stochastic survival probabilities.

This work suggests several routes for future research. For instance, all the conic martingales derived in this paper have constant cones. Naive extension of the above construction schemes lead to SDEs that do not meet the usual existence criteria. It is not clear yet whether bounded martingales with time-dependent cones can be found explicitly. Another route for future research deals with non-continuous bounded martingales. Finally, we believe this work opens the door for alternative approaches for the risk management of products depending on default probability curves, like for example the modeling of exposure profiles of credit-linked financial instruments.

\section{Appendix}

\subsection{Derivation of the law of $X_t$ in the Vehulst martingale $Y_t=\exp\{-\lambda X_t\}$}
\label{app:verhulst}
%
Let us note the geometric Brownian motion $\Theta_t=g(t,W_t)$ where $g(t,x):=X_0\xi(t,x)$, $\xi(t,x):=\exp\left\{\nu x-\frac{\nu^2 t}{2}\right\}$ and let $\hat{\Theta}_t:=\int_0^t \Theta_s ds$. We seek for the density of $X_t=\Theta_t/\left(1-\mu\hat{\Theta}_t\right)$. To that end, we are interested in the joint density of $\left(\Theta_t,\hat{\Theta}_t\right)$. From the conditional density $p_{\hat{\Theta}_t|W_t}(y,x)$ of $\hat{\Theta}_t$ conditional upon the terminal value of the Brownian motion $W_t=x$, one gets
\beqn
 f_{X_t}(z)&=&\int_{-\infty}^\infty p_{\hat{\Theta}_t|W_t}\left(\frac{z-g(t,x)}{\mu z}\Big|x\right)\frac{\phi(x/\sqrt{t})}{\sqrt{t}}dx
\nonumber\eeqn
This expression features the density of the integral $\hat{\Theta}_t$ of a geometric Brownian motion conditional upon the terminal value of the Brownian motion $W_t$. This expression is quite important in finance, and appears in Asian options. Therefore, it received some attention and Marc Yor derived the corresponding expression by using relationships with Bessel processes~\cite{Yor92}.

Following Yor's notations, define
\beqn
A_{t}(\nu)&=&\int_0^t\exp\left\{2(W_s+\nu s)\right\}ds \nonumber
\eeqn
Then, observed that from the scaling property of Brownian motion,
\beqn
\int_0^t \exp\left\{aW_s+bs\right\}ds&\sim&\frac{4}{a^2}A_{a^2t/4}(2b/a^2)\nonumber
\eeqn
Setting $a=\eta$, $b=-\eta^2/2$, $\nu=2b/a^2=-1$ and $t'=\eta^2t/4$, we obtain
\beq
\left(\hat{\Theta}_t,W_t\right)\sim\left(\frac{4X_0}{\eta^2}A_{t'}(-1),\frac{2}{\eta}W_{t'}\right)
\nonumber\eeq
On the other hand, if $\textbf{Y}=\textbf{A}\textbf{X}$ where $\textbf{A}$ is an invertible matrix and $\textbf{X},\textbf{Y}$ random (column) vectors, then the density of $\textbf{Y}$ is given by $f_{\textbf{Y}}(\textbf{y})=\frac{1}{|\det \textbf{A}|}f_{\textbf{X}}(\textbf{A}^{-1}\textbf{y})$. With $\textbf{A}=Diag(\frac{4X_0}{\eta^2},\frac{2}{\eta})$,
\beqn
 f_{X_t}(z)&=&\frac{\eta^3}{8X_0\sqrt{t'}}\int_{-\infty}^\infty p_{A_{t'}(-1)|W_{t'}}\left(\frac{\eta^2}{4 X_0}\left(\frac{z-g\left(t,\frac{\eta x}{2}\right)}{\mu z}\right)\Big|\frac{\eta x}{2}\right)\phi\left(\frac{\eta x}{2\sqrt{t'}}\right)dx\nonumber\\
&=&\frac{\eta ^2}{4X_0\sqrt{t'}}\int_{-\infty}^\infty p_{A_{t'}(-1)|W_{t'}}\left(\frac{\eta^2}{4 X_0}\left(\frac{z-g\left(t,y\right)}{\mu z}\right)\Big|y\right)\phi\left(y/\sqrt{t'}\right)dy
\nonumber\eeqn
Yor argued that it is enough to study the conditional law of $A_t(0)$ since $p_{A_{t}(\nu)|W_{t}}\left(z|y+\nu_t\right)=p_{A_{t}(0)|W_{t}}\left(z|y\right)$. To see this, let us define the measure $\tilde{\Q}$ according to the Radon-Nikodym derivative process $\frac{d\tilde{\Q}(\omega)}{d\Q(\omega)}\Big|_{\filF_t}=\xi(t,W_t)$. From Girsanov's theorem, $\tilde{W}_t=W_t+\nu t$ is a $\tilde{\Q}$-Brownian motion, and the claim follows:
\beqn
p_{A_{t}(\nu)|W_{t}}\left(z|y\right)&=&\frac{\int_\Omega \ind_{\{A_t(\nu)=z,W_t=y\}}d\Q(\omega)}{\int_\Omega \ind_{\{W_t=y\}}d\Q(\omega)}\nonumber\\
&=&\frac{\int_\Omega \ind_{\{\int_0^t\exp\left\{2\tilde{W}_s\right\}ds=z,\tilde{W}_t=y+\nu t\}}d\Q(\omega)}{\int_\Omega \ind_{\{\tilde{W}_t=y+\nu t\}}d\Q(\omega)}\nonumber\\
&=&\frac{\int_\Omega \ind_{\{\int_0^t\exp\left\{2\tilde{W}_s\right\}ds=z,\tilde{W}_t=y+\nu t\}}\xi(t,y)d\tilde{\Q}(\omega)}{\int_\Omega \ind_{\{\tilde{W}_t=y+\nu t\}}\xi(t,y)d\tilde{\Q}(\omega)}\nonumber\\
&=&\frac{\xi(t,y)\int_\Omega \ind_{\{A_t(0)=z,W_t=y+\nu t\}}d\Q(\omega)}{\xi(t,y)\int_\Omega \ind_{\{W_t=y+\nu t\}}d\Q(\omega)}\nonumber\\
&=&p_{A_{t}(0)|W_{t}}\left(z|y+\nu t\right)\nonumber
\eeqn
Finally, the density $f_{X_t}$ is obtained from the law of $A_t(0)$ conditioned upon the terminal value of the Brownian motion $W_t$, $a_t(y,z)=p_{A_{t}(0)|W_{t}}\left(z|y\right)$, which is proven in~\cite{Yor92} to be:
\beqn
a_t(y,z)&=&\frac{\sqrt{t}}{z\phi(y/\sqrt{t})}\exp\left(-\frac{1+e^{2y}}{2z}\right)\theta_{e^{y}/z}(t)\nonumber\\
\theta_r(u)&=&\frac{1}{\sqrt{2u\pi ^3}}\exp\left(\frac{\pi^2}{2u}\right)\Psi_r(u)\nonumber\\
\Psi_r(u)&=&\int_{0}^\infty \exp\left(\frac{-y^2}{2u}\right)\exp\left(-r\cosh y\right)\sinh(y)\sin\left(\frac{\pi y}{u}\right)dy\nonumber
\eeqn
and we obtain
\beqn
f_{X_t}(z)&=&\frac{\eta^2}{4X_0\sqrt{t'}}\int_{-\infty}^\infty a_{t'}\left(y- t',\frac{\eta^2}{4 X_0}\left(\frac{z-g\left(t',y- t'\right)}{\mu z}\right)\right)\phi\left(y/\sqrt{t'}\right)dy\nonumber
\eeqn
%
\subsection{Collapsing property of bounded martingales}
\label{app:collapse}
%
It has been proven that when $X $ is a Gaussian diffusion  with diffusion coefficient $\eta$ and drift $(\eta^2/2)x$ then $Y=\Phi(X)$ is a martingale bounded in $[0,1]$ which converges in distribution to a $Bernoulli(Y_0)$. This proof was easy as the distribution of $Y_t$ is known analytically. However, it is likely that autonomous martingales of the form $F(X)$ where $X$ is a free process and the image of $F$ is a compact interval $[a,b]$ will share the same ``collapsing'' feature. Although we do not give a formal proof, we provide an intuitive development below. We further discuss which form of mappings $F$ could potentially \textit{not} have this feature.

Let $F$ (assumed to be strictly increasing and $\mathcal{C}^2$) and $f$ be unimodal (i.e. $f'(x)$ is first positive, then vanishes at some point $x^\star$ and then remains negative) when $\eta(t,X_t)=\eta$. Recall the SDE followed by  $X$:
\beq
dX_t=-\frac{\eta^2}{2}\frac{f'(X_t)}{f(X_t)}dt+\eta dW(t)=\frac{\eta^2}{2}\psi(X_t)dt+\eta dW(t)
\eeq
Because $f'(x)<0$ for all $x>x^\star$ and $f'(x)>0$ for all $x>x^\star$, we can see from the above SDE that $X $ has a positive drift when being above $x^\star$ and a negative drift otherwise. As long as we choose $F$ such that $f$ is unimodal, then the process will tend to diverge, and same will hold true for $Y =F(X )$. In other words, conic martingales $Y $ obtained via an underlying process $Y=F(X)$ will be attracted towards one of the boundaries when the bounds $(a,b)$ are constant and $f=F'$ is unimodal.

Although it could take quite some time before $Y $ collapses to $a$ or $b$, it is very likely to happen, unless $F$ is chosen to have some specific properties, and the above development allows us to understand which properties may break this attraction.

A first possibility of course is to use a vanishing time-dependent diffusion coefficient $\eta(t)$. If $\eta(t)$ collapses to zero, the   process $X$ will be frozen, and so are the paths of the $Y_t$ process. However, we argue that this is not the only way to prevent all paths to converge to one of the bounds. Although we do not give formal proof, we claim that this can be achieved by choosing $F$ so that $F'=f$ is bimodal (i.e. $f'$ changes sign between the two modes) and $X_0$ belongs to the interval defined by the two modes. Consider for example $F(x)=1/2\left(\Phi\left(\frac{x-(X_0+\mu)}{s}\right)+\Phi\left(\frac{x-(X_0-\mu)}{s}\right)\right)$ for $s>0$ and $\mu$ large enough to ensure that the sign of $f'$ changes between the two modes. This function maps $\mathbb{R}$ to the unit interval $[0,1]$. Then, the fact that $X_t$ falls in the ``dip'' $[X_0-\mu,X_0+\mu]$ will create a pulling effect such that $X_t$ will tend to stay within this interval.\footnote{Observe that this pulling effect does not impact the martingale property of $Y$, since this is compensated by the function $F$, just like the fact that $X$ is a diverging process when $F=\Phi$ is used did not impact the martingality of $Y$ in the unimodal case.} The effect of the bimodal nature of $f$ is to partly prevent all paths to collapse to one of the boundaries (the center of the distribution would not be empty anymore; the probability to be in arbitrarily small neighborhood of the bounds would be non-zero, but would not sum to 1 either). It remains to formally prove that $\Q\{X_t\in [X_0-\mu,X_0+\mu]\}>0$ as $t\to\infty$ for any $\mu>0$. This is cumbersome since we do not have an analytical expression for the distribution of $X_t$. However, Monte Carlo simulations or PDE solver seem to confirm that this is effectively the case: only part of the paths collapse to the boundary. The ``sharpness of the dip'' (which can be tuned by playing with $s$) determines the probability that $Y_t$ lies in $[F(X_0-\mu),F(X_0+\mu)]$ as $t\to\infty$. However, the process $X_t$ built according to the above procedure has a stable stationary point (zero drift, or equivalently $f'(x)=0$) at $X_0$ (zero drift, and in the neighborhood, the drift tends to pull $X_t$ back to $X_0$), and unstable stationary points at $X_0\pm\mu$ (zero drift, but when $X_t$ moves around these points, the effect of the drift is to push $X_t$ away from those). It is then likely that the paths, instead of collapsing (asymptotically) to either 0 or 1 almost surely, now asymptotically collapse to $\{0,1\}$ with some probability $p$, but $X_t$ has a non-zero probability $1-p$ to be in the interval $[X_0-\mu,X_0+\mu]$ even in the limit $t\to\infty$. In particular, we expect to have $\lim_{t\to\infty}\Q\{Y_t\in(0,F(X_0-\mu)]\}=\lim_{t\to\infty}\Q\{[Y_t\in[F(X_0+\mu),1)\}=0$, while for any $\epsilon>0$, $\lim_{t\to\infty}\Q\{Y_t\in(1-\epsilon,1]\}=\lim_{t\to\infty}\Q\{Y_t\in[0,\epsilon)\}>0$ and $\lim_{t\to\infty}\Q\{Y_t\in[F(X_0-\mu),F(X_0+\mu)]\}>0$.

\subsection{Distribution of $Q_{t}(T)$} \label{app:distribution}
%
In this section we show that for $x\in(0,1)$, $\eta^2<\infty$ and $S_0(T)<S_0(t)$, the cumulative distribution function $F_{Q_{t}(T)}(x)$ of $Q_{t}(T)$ is given by $\Phi(z^\star(t,T,x,\eta))$.

First, we notice that $\eta Z_t\sim \sqrt{v(t)}Z$ where $Z$ a standard Normal variable, the distribution function of $Q_{t}(T)$ is given by
\beqn
\Q\{Q_{t}(T)\leq x\}&=&\Q\left\{ Q(t,T;Z)\leq x\right\}\\
&=&\Q\left\{ \Phi\left(m(t,T)+\sqrt{v(t)}Z\right)\leq x\Phi\left(m(t,t)+\sqrt{v(t)}Z\right)\right\}\\
&=&\Q\left\{ \Phi\left(y\right)\leq x\Phi\left(m+y\right)\right\}\\
y&:=&m(t,T)+\sqrt{v(t)}Z\\
m&:=&m(t,t)-m(t,T)
\eeqn
From the above notations, we can write
\beq
F_{Q_{t}(T)}(x)=\int_{z:G(z)>0}d\Phi(z)
\eeq
Moreover, by definition of $z^\star$, $G(z^\star)=0$. It remains to show that $\{z:G(z)>0\}=(-\infty,z^\star]$, in which case we have the claim
\beq
\int_{z:G(z)>0}d\Phi(z)=\Phi(z^\star)
\eeq
Clearly, under our assumptions on $S_0(T)$, $m>0$. We define
\beq
\tilde{G}(y)=x\Phi(m+y)-\Phi(y)
\eeq
so that $\tilde{G}(y)=\tilde{G}(m(t,T)+\sqrt{v(t)}z)=G(z)$. Our purposes is to show that for $m>0$ and $x\in(0,1)$, $\{y:\tilde{G}(y)>0\}=(-\infty,y^\star]$ with $z^\star=(y^\star-m(t,T))/\sqrt{v(t)}$, since the claim then results from a continuity argument.

First, we notice that $\tilde{g}(y)=d\tilde{G}(y)/dy$ has one single root, $y_0=\ln(x)/m-m/2$, and $\tilde{g}(y)>0$ for $y<y_0$ whilst $\tilde{g}(y)<0$ for $y>y_0$. Because $\tilde{G}(y)=\int_{-\infty}^y \tilde{g}(u)du$ (the integration constant is zero as $\lim_{y\downarrow -\infty}\tilde{G}(y)=0$), the smallest root of $\tilde{G}$, $y_1^\star$ is larger than $y_0$; $y_1^\star>y_0$. However, for $y>y_0$, $\tilde{g}(y)<0$, meaning that on the right of its first root, $\tilde{G}(y)$ is strictly decreasing from $0$. Function $\tilde{G}(y)<0$ for $y>y^\star_1$, showing that if it exists, $y_1^\star$ is the unique root $y^\star$ of the continuous function $\tilde{G}$ or equivalently, that $G$ admits a unique root $z^\star=(y^\star-m(t,T))/\sqrt{v(t)}$.


\ifdefined \MyBib
    \bibliography{\MyBib}
  \bibliographystyle{unsrtnat}
\fi
\end{document}